\DeclareMathOperator{\lk}{Lk}
\newtheorem{thm}{Theorem}[section]
\newtheorem{proposition}[thm]{Proposition}
\newtheorem{lem}[thm]{Lemma}
\newtheorem{claim}[thm]{Claim}
\newtheorem*{thm*}{Theorem}
\newenvironment{customthm}[1]
{\innercustomthm}
{\endinnercustomthm}
\newtheorem{prop}[thm]{Proposition}
\theoremstyle{definition}
\newtheorem{defn}[thm]{Definition}
\newtheorem{remark}[thm]{Remark}
\author{Michael Ben-Zvi, Robert Kropholler and Rylee Alanza Lyman}
\title{Folding-like techniques for CAT(0) cube complexes}
\begin{document}
\maketitle
\begin{abstract}
In a seminal paper, Stallings introduced folding of morphisms of graphs.
One consequence of folding is the representation of finitely-generated subgroups 
of a finite-rank free group
as immersions of finite graphs.
Stallings's methods allow one to construct this representation algorithmically,
giving effective, algorithmic answers and proofs to classical questions about
subgroups of free groups.
Recently Dani--Levcovitz used Stallings-like methods to study subgroups 
of right-angled Coxeter groups, which act geometrically on CAT(0) cube complexes.
In this paper we extend their techniques 
to fundamental groups of non-positively curved cube complexes.
\end{abstract}
\section{Introduction}
In the seminal paper \cite{Stallings}, Stallings studies subgroups of free groups by introducing a technique called \textit{folding}. For finitely generated subgroups, Stallings's technique gives an algorithmic method for answering classical algebraic questions about the subgroup, such as answering the membership problem, determining the subgroup's rank and index, and determining whether or not it is normal.

More explicitly,
Stallings showed that if $f\colon (\Lambda,p) \to (\Gamma,q)$
is a map of basepointed graphs, then the map $f$ factors as
\[\begin{tikzcd}
	(\Lambda,p) \ar[rr,"f"] \ar[rd,"p",swap]
	& & (\Gamma,q) \\
	& (\hat\Lambda,\hat p) \ar[ur,"\hat f",swap], &
\end{tikzcd}\]
where the map $\hat f$ is an immersion (i.e. a local isometry),
and the map $p$ is surjective and can be realized as a sequence
of elementary moves called \emph{folds,}
which identify pairs of edges in the domain graph.
Moreover, if $\Lambda$ is a finite graph, 
then $\hat\Lambda$ can be computed algorithmically. The group-theoretic results from this technique follow when $\Gamma$ is a rose with $n$ petals and $f_\ast(\pi_1(\Lambda,p))$ is a finitely generated subgroup of $F_n = \pi_1(\Gamma,q)$. 

Recently, Dani--Levcovitz \cite{DaniLev} used Stallings-like methods 
to study subgroups of right-angled Coxeter groups.
Specifically, to any subgroup $H$ they associate a \emph{completion},
a non-positively curved cube complex $X_H$ equipped with a local isometry
to the \emph{Davis complex} for the right-angled Coxeter group,
which plays the role of the rose with $n$ petals.
Their techniques are combinatorial,
inspired by those of Kharlampovich, Miasnikov and Weil in \cite{KharlampovichMiasnikovWeil}.
Using Dunwoody's theory of \emph{tracks,} Beeker and Lazarovich \cite{BeekerLazarovich}
study Stallings folding for cubulated hyperbolic groups.
The purpose of this paper is to unite these common threads:
we show how to perform Stallings folding 
for fundamental groups of non-positively curved cube complexes.
We relate the completions of Dani--Levcovitz to the geometry of the cube complex in question,
thus proving a uniqueness result left as a question in \cite{DaniLev}.

\begin{thm}\label{mainthm}
	Let $(Y,q)$ be a basepointed non-positively curved cube complex.
	Let $H$ be a subgroup of $\pi_1(Y, q)$ with generating set $S$ given as a based map $\iota (\Gamma_S, p) \to (Y, q)$, where $\Gamma_S$ is a graph. 
	Then, there exists a unique pointed non-positively curved cube complex $(X_H,q)$
	and a local isometry $(X_H,q) \to (Y,q)$ 
	whose image on fundamental groups is $H$ and through which $f$ factors.
	
	Moreover, there is an algorithm which takes as input $H$ and $S$
	and produces (in the limit) $X_H$.
	If $S$ is finite and $H$ is quasi-convex 
	(i.e. convex cocompact on the universal cover $\tilde Y$),
	then $X_H$ is compact and the algorithm terminates in finite time.
\end{thm}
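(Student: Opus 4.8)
The plan is to mimic Stallings's argument with two kinds of elementary moves replacing folds. A \emph{fold} identifies a pair of edges issuing from a common vertex of the domain and having the same image in $Y$ (identifying their far endpoints as well, if distinct). A \emph{cube attachment} glues on a cube along a corner (the subcomplex of its boundary spanned by the faces through a single vertex), chosen so as to be a homotopy equivalence onto its image. Recall from Haglund--Wise that a combinatorial map $f\colon X \to Y$ of non-positively curved cube complexes is a local isometry precisely when it is locally injective and, for every vertex $v$, the induced map $\lk(v) \to \lk(f(v))$ is an embedding onto a full subcomplex. A fold repairs a failure of local injectivity; and once $\lk(v) \to \lk(f(v))$ is injective, replacing its image by the full subcomplex it spans is achieved by finitely many cube attachments, and that subcomplex is automatically flag, being a full subcomplex of the flag complex $\lk(f(v))$. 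So the two moves suffice to reach a local isometry, and each move keeps the target a cube complex mapping to $Y$.

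First I would put $\iota$ in combinatorial form: subdividing edges of $\Gamma_S$ and collapsing those sent to vertices yields a graph $X_0$ with a combinatorial map $X_0 \to Y^{(1)} \subseteq Y$ inducing $H$ on $\pi_1$. Then I would run the two moves against a bookkeeping queue that schedules every available fold and every link-fullness defect, so that all are eventually addressed. The governing invariant is that the image of $\pi_1$ in $\pi_1(Y,q)$ never changes: a fold is $\pi_1$-surjective, hence the map to $Y$ factors through it with the same image; and a cube attachment along a corner is a homotopy equivalence compatible with the maps to $Y$. Passing to the direct limit $X_H = \varinjlim X_i$ produces a non-positively curved cube complex with a map $X_H \to Y$ which, by fairness of the scheduling, is a local isometry, and through which $\iota$ factors. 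As a local isometry is $\pi_1$-injective, $\pi_1(X_H,q) \cong H$ with the correct image in $\pi_1(Y,q)$.

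For uniqueness I would pass to the universal cover. A based local isometry $(X,q) \to (Y,q)$ lifts to a convex embedding $\tilde X \hookrightarrow \tilde Y$ equivariant for $\pi_1(X) \to \pi_1(Y)$; when the image is $H$, the image of $\tilde X$ is a convex, $H$-invariant subcomplex, and since $\iota$ factors through $X$ it contains $H \cdot \tilde\iota(\tilde\Gamma_S)$. The minimal such $X$ — the one spanned by the cubes forced by the image of $\iota$, playing the role of Stallings's core, which I take to be the sense of ``through which $f$ factors'' — thus lifts to exactly the combinatorial convex hull of $H \cdot \tilde\iota(\tilde\Gamma_S)$ in $\tilde Y$. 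Combinatorial convex hulls are intersections of half-spaces, so this subcomplex is $H$-invariant and canonically determined by $H$ and $\iota$, with no dependence on the choices made in folding. This pins down $X_H$ up to a unique basepoint-preserving isomorphism over $Y$; in particular distinct fair folding sequences have the same limit, which is the uniqueness left open in \cite{DaniLev}.

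The algorithm is to carry out the folding sequence: $Y$ being a finite (or recursive) complex, each move is effective and the enumeration outputs the $X_i$ with $X_H$ as their limit. The real obstacle I anticipate is finite termination when $S$ is finite and $H$ is convex cocompact on $\tilde Y$. Here $H$ acts cocompactly on the convex hull $C$ of an $H$-orbit, so $X_H = C/H$ is compact; the task is an a priori bound on the $X_i$. Keeping all folding maps over the cover $Y_H \to Y$ associated to $H$, one sees that as soon as $X_i \to Y_H$ is a local isometry whose image contains a fixed compact core, the map $X_i \to X_H$ is a covering of degree $[\pi_1 X_H : \pi_1 X_i] = 1$, hence an isomorphism — so it suffices to bound how long folding can run before this happens. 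Controlling the sizes of the intermediate complexes in terms of $S$ and the quasi-convexity constant (which limits how far cube attachments can reach) is the technical heart, and is where I would expect to spend the most effort, following the complexity estimates of Kharlampovich--Miasnikov--Weil and of Dani--Levcovitz.
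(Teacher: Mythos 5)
Your architecture matches the paper's: elementary moves applied fairly with a direct limit, uniqueness via identifying the universal cover of the completion with the cubical convex hull of $H\cdot\tilde\iota(\tilde\Gamma_S)$ in $\tilde Y$, and termination from cocompactness of the $H$-action on that hull. But there is a genuine gap in your toolkit of moves: you use only folds and cube attachments, and you assert that ``a fold repairs a failure of local injectivity'' and that once $\lk(v)\to\lk(f(v))$ is injective, cube attachments finish the job. Folding only repairs non-injectivity of $f_v$ on \emph{vertices} of links, i.e. it identifies edges of $X$. It does nothing when two \emph{distinct} $n$-cubes of $X$ have the same $1$-skeleton: since $Y$ is non-positively curved, such cubes necessarily have the same image, so $f_v$ fails to be injective on higher-dimensional simplices of $\lk(v)$, and neither a fold nor an attachment can fix this. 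This situation genuinely arises during the algorithm (for instance, attach squares at two different corners and then fold some of the resulting edges together; the two squares can end up with identical boundaries). The paper therefore uses a third move, \emph{cube identification}, which quotients two cubes with equal $1$-skeleta; without it your limit need not be a local isometry (indeed its links need not even be simplicial), and the claim that the two moves ``suffice to reach a local isometry'' is false as stated. Adding this move also matters for your uniqueness step, since it is what guarantees the limit injects into $\tilde Y$ rather than merely immersing.

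A smaller point: you flag the a priori bounds on the intermediate $X_i$ as ``the technical heart'' requiring complexity estimates in the style of Kharlampovich--Miasnikov--Weil. The paper avoids this entirely: by Haglund's Theorem H the convex hull of the quasiconvex set $\tilde f(\tilde X)$ carries a cocompact $H$-action, so the completion $X_H$ is a \emph{finite} complex, and termination follows from finiteness of the limit together with the structure of the moves (the images of the $X_i$ in $X_H$ form an increasing chain of subcomplexes of a finite complex, and once they stabilize only finitely many folds and identifications remain). You already state the key fact --- that $X_H = C/H$ is compact --- so no quantitative control of the intermediate complexes is actually needed.
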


We use this algorithm to prove several results about quasi-convex subgroups of cubical groups. Namely, we prove the following, 

\begin{customthm}{\ref{finalthm}}
	Let $G = \pi_1(Y,q)$ for a nonpositively-curved cube complex $Y$.
	Let $H$ be a quasiconvex subgroup of $G$.
	Then there is an algorithm that solves the following problems.
	\begin{enumerate}
		\item The membership problem for $H$.
		\item Whether, given $g\in G$, there exists an $n$ such that $g^n\in H$. 
		\item Whether $H$ is normal in $G$.
		\item Whether $H$ is of finite index $G$, and determines the index if so.
	\end{enumerate}
\end{customthm}

The algorithm is essentially identical to the extension of Stallings's folding
presented in \cite{DaniLev}: 
For $\Gamma_S$ to reflect properties of $H$,
one wants $\iota$ to be a local isometry.
To accomplish this, one alters $\Gamma_S$ by means of three moves:
\begin{enumerate}
	\item \emph{Folding,} Stallings's original move, which identifies pairs of edges
		in the $1$-skeleton of the domain cube complex,
	\item \emph{Cube Attachment,} which fills in cubes ``missing'' from the domain
		cube complex, and
	\item \emph{Cube Identification,} which acts as a kind of higher-dimensional
		generalization of folding.
\end{enumerate}

The uniqueness result stems from the following fact.
Covering space theory associates to the map $\iota\colon (\Gamma_S,\ast) \to (Y,q)$
a map $\tilde\iota\colon (\tilde\Gamma_S,\tilde\ast) \to (\tilde Y,\tilde q)$
of universal covers.
In the case that $\iota$ is a local isometry,
the map $\tilde \iota$ will be an isometric embedding.
Using the same notation as above, we have the following result.

\begin{thm}\label{cubicalconvexhull}
	Let $H$ and $S$ be as in \cref{mainthm}. 
	If $f\colon (X_H,p) \to (Y,q)$ is the local isometry constructed in
	\Cref{mainthm}, the image of the isometric embedding
	$\tilde f\colon (\tilde X_H,\tilde p) \to (\tilde Y,\tilde q)$
	is the cubical convex hull of the lift $\tilde{\iota}\colon T\to \tilde{Y}$, where $T$ is the universal cover of $\Gamma_S$.
\end{thm}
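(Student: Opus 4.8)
The plan is to prove the two inclusions separately, the content being that $\tilde f(\tilde X_H)$ is pinned down by the combinatorics of the three moves. Write $C=\tilde f(\tilde X_H)$ and let $Z$ be the cubical convex hull of $\tilde\iota(T)$ in $\tilde Y$. For $Z\subseteq C$: the map $\tilde f$ is an isometric embedding of $\tilde X_H$ onto a convex subcomplex $C$ of $\tilde Y$ (a standard consequence of $f$ being a local isometry of non-positively curved cube complexes), and since $\iota$ factors through $f$ the lift $\tilde\iota$ factors through $\tilde f$, so $\tilde\iota(T)\subseteq C$; as $Z$ is the smallest convex subcomplex containing $\tilde\iota(T)$, this gives $Z\subseteq C$.

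For $C\subseteq Z$ I would use the halfspace description: $Z$ is the intersection of all combinatorial halfspaces of $\tilde Y$ that contain $\tilde\iota(T)$, so it is enough to show that \emph{every hyperplane of $\tilde Y$ crossing $C$ separates two vertices of $\tilde\iota(T)$}. Indeed, if a vertex $x$ of $C$ lay outside $Z$, a halfspace $\mathfrak h\supseteq\tilde\iota(T)$ with $x\notin\mathfrak h$ would give a hyperplane with $\tilde\iota(T)$ and $x$ on opposite sides, both inside the convex set $C$, hence crossing $C$, hence — by the displayed statement — separating two vertices of $\tilde\iota(T)$, contradicting $\tilde\iota(T)\subseteq\mathfrak h$. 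Since $\tilde f$ embeds $\tilde X_H$ as a convex subcomplex, the hyperplanes of $\tilde Y$ crossing $C$ correspond by restriction to the hyperplanes of $\tilde X_H$, and edges of $\tilde X_H$ are dual to a common hyperplane exactly when joined by a sequence of squares. Writing $j\colon\Gamma_S\to X_H$ for the factoring map and $\tilde j$ for its lift, so that $\tilde f\circ\tilde j=\tilde\iota$, it therefore suffices to prove that every hyperplane of $\tilde X_H$ is dual to an edge lying in $\tilde j(T)$: its image under $\tilde f$ is then an edge of $\tilde\iota(T)$ crossing the corresponding hyperplane of $\tilde Y$, so its endpoints are the desired two vertices of $\tilde\iota(T)$.

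The heart of the proof is the combinatorial claim that \emph{every class of square-equivalent edges of $X_H$ contains an edge of $j(\Gamma_S)$}, proved by induction along the sequence of moves; the base case $X_H=\Gamma_S$ is vacuous. A fold and a cube identification are quotient maps introducing no new cells, so squares — hence square-equivalence classes — downstairs are images of those upstairs, and the image of $\Gamma_S$ is carried into the image of $\Gamma_S$; the property persists. A cube attachment glues a cube along a corner spanned by already-present edges $e_1,\dots,e_n$, and every edge of the new cube is parallel within that cube to one of the $e_i$, so its (possibly enlarged) square-equivalence class already meets $j(\Gamma_S)$ by induction; the property persists, and it survives the direct limit since each edge and each square of $X_H$ appears at a finite stage. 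To pass from ``an edge of $j(\Gamma_S)$'' to ``an edge of $\tilde j(T)$'' one observes that $j$ is $\pi_1$-surjective — $f_\ast$ is injective with $H=\im(\iota_\ast)=\im(f_\ast j_\ast)$, forcing $j_\ast(\pi_1\Gamma_S)=\pi_1 X_H$ — so the full preimage of $j(\Gamma_S)$ in $\tilde X_H$ is connected and equals $\tilde j(T)$, and one lifts the relevant chain of squares into $\tilde X_H$.

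I expect this last, combinatorial claim to be the main obstacle: it needs a careful accounting of the effect of each of the three moves on the $1$-skeleton and on square-equivalence of edges, so that ``no new hyperplanes are created'' is literally true. The other ingredients — convexity of $C$, the halfspace description of the hull, the bijection between hyperplanes of $C$ and edges of $\tilde X_H$, and lifting square-chains — are routine. I would also note that, \emph{a posteriori}, this theorem yields the uniqueness assertion of \cref{mainthm}, since $\tilde f(\tilde X_H)=Z$ depends only on $\tilde\iota$ and not on the choices made by the algorithm; for that reason I would not use that uniqueness here.
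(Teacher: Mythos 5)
Your proof is correct, but it takes a genuinely different route from the paper's. The paper does not prove this theorem by a two-inclusion argument on the hull; instead it builds, in Section 3, the dual CAT(0) cube complex $Z$ of the poset of half-spaces of $\tilde Y$ meeting $\tilde\iota(T)$ via Sageev's construction, shows in \cref{completionupstairs} that $Z$ embeds as the cubical convex hull of $\tilde\iota(T)$, and then identifies $Z$ with the universal cover $U$ of the completion: the inclusion $Z\subseteq\tilde f(U)$ is the same convexity argument as your first inclusion, while the reverse inclusion is dispatched by appealing to Roller duality (``$U$ is contained in the cube complex obtained from the collection of half-spaces intersecting the image of $\tilde X$''). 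That appeal silently uses exactly the fact you isolate as the heart of the matter --- that every hyperplane of $\tilde X_H$ is dual to an edge of $\tilde j(T)$, i.e.\ that folds, cube identifications and cube attachments create no new hyperplanes --- and the paper asserts this again without proof in the uniqueness theorem (``Note that every hyperplane of $\tilde Z$ and $\tilde Z'$ intersects $\tilde f(\tilde X)$''). Your move-by-move induction (quotient maps add no cells so square-equivalence classes downstairs are unions of images of classes upstairs; every edge of an attached cube is parallel within that cube to one of the attaching edges; each cell appears at a finite stage of the direct limit) is a correct and essentially complete justification of that fact, and your lifting step via $\pi_1$-surjectivity of $j$ is also sound. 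What the paper's approach buys is an intrinsic, move-independent model of $\tilde X_H$ via Roller duality, which it then reuses for uniqueness; what your approach buys is an elementary, self-contained verification of the one claim the paper leaves implicit, together with uniqueness as a corollary, as you note. The only caveat is cosmetic: the base case of your induction is trivially true rather than vacuous, and you should say explicitly that the identifications performed in a fold never collapse a square (two adjacent edges of a square have distinct images in $Y$ because $f$ restricts to an isometry on each square), so that squares do map to squares under the quotient.
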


\section*{Acknowledgements}
The second author was funded by the Deutsche Forschungsgemeinschaft (DFG, German Research Foundation) under Germany's Excellence Strategy EXC 2044 –390685587, Mathematics Münster: Dynamics–Geometry–Structure.

\section{A Cubical Completion}

Recall that a \emph{cube complex} is a metric cell complex constructed from Euclidean
unit cubes $[-1,1]^n$ glued together by isometries of their faces.

The local structure of a cube complex $X$ near a vertex $v$ is captured by its \emph{link,}
$\lk_X(v)$. The link may be thought of as a small sphere around the vertex.
As such, combinatorially $\lk_X(v)$ has one $n$-simplex for each $(n+1)$-cube containing $v$,
and the faces of such an $n$-simplex correspond to the faces of the $(n+1)$-cube
which also contain the vertex $v$.
For example, the link of a vertex in a $3$-cube is a $2$-simplex.
A cube complex is \emph{non-positively curved}---i.e. the induced path metric
locally satisfies the CAT(0) condition---if the link of each vertex is a \emph{flag simplicial complex}
\cite{Sageev,Gromov}; a higher-dimensional simplex is present if and only if its $1$-skeleton is.\footnote{
Warren Dicks: ``every non-simplex contains a non-edge."} 
A simply connected, non-positively curved cube complex is \emph{globally} CAT(0).
In particular, it is contractible.
A \emph{CAT(0) cube complex} is a simply connected non-positively curved cube complex.

Let $X$ and $Y$ be non-positively curved cube complexes,
and let $f\colon X \to Y$ be a \emph{cubical map}---that is,
$f$ sends $n$-cubes of $X$ isometrically to $n$-cubes of $Y$.
Then for each vertex $v$ of $X$, $f$ induces a map
$f_v \colon \lk_X(v) \to \lk_Y(f(v))$.
We say that the map $f$ is an \emph{immersion} if each induced map $f_v$ is injective.
If additionally the image of each $f_v$ is a \emph{full} subcomplex of $\lk_Y(f(v))$,
we say that $f$ is a \emph{local isometry.}
We have the following basic fact \cite[Prop.~4.14]{BH99}.

\begin{prop}\label{inducedmaponCC}
	Let $f\colon(X,p) \to (Y,q)$ be a map of basepointed,
	non-positively curved cube complexes which is a local isometry.
	Then the induced map $f_\ast\colon\pi_1(X,p) \to \pi_1(Y,q)$
	is injective.
	
	If additionally $f$ is surjective and 
	each local map $f_v \colon \lk_X(v) \to \lk_Y(f(v))$ is an isomorphism, 
	then $f$ is a covering map.
\end{prop}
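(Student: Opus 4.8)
The plan is to pass to the universal covers and argue with CAT(0) geometry. First I would lift $f$ to a cubical map $\tilde f\colon(\tilde X,\tilde p)\to(\tilde Y,\tilde q)$ of universal covers normalised by $\tilde f(\tilde p)=\tilde q$; this lift exists because cube complexes are locally contractible, and it is $f_\ast$-equivariant, $\tilde f(g\cdot\tilde x)=f_\ast(g)\cdot\tilde f(\tilde x)$, for the deck actions. Both $\tilde X$ and $\tilde Y$ are simply connected and non-positively curved, hence CAT(0) by the fact recalled above; and since covering projections induce isomorphisms on links, the induced maps $\tilde f_{\tilde v}$ are again injective with full image, so $\tilde f$ is again a local isometry in the combinatorial sense.

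The heart of the argument is to upgrade this to the metric statement that $\tilde f$ is a local isometry for the piecewise-Euclidean metric, so that it preserves lengths of paths locally and carries local geodesics of $\tilde X$ to local geodesics of $\tilde Y$. To see this one analyses links. A combinatorial path is a local geodesic precisely when it is affine in each cube it meets and, at every vertex $v$ it traverses, the incoming and outgoing directions $\xi^{-},\xi^{+}$ satisfy $d(\xi^{-},\xi^{+})\geq\pi$ in the all-right spherical complex $\lk_{\tilde X}(v)$, which is CAT(1) because it is flag. The point I would make is that the injective simplicial map $\tilde f_{\tilde v}$ has image $A$ a \emph{full} subcomplex of the flag complex $\lk_{\tilde Y}(\tilde f v)$; such a map is a simplicial isomorphism onto $A$, hence an isometry onto $A$ for the all-right spherical metrics, and $A$ --- being full in a flag complex --- is itself flag and therefore $\pi$-convex in $\lk_{\tilde Y}(\tilde f v)$. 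Hence $d(\tilde f_{\tilde v}\xi^{-},\tilde f_{\tilde v}\xi^{+})\geq\pi$ there as well, so $\tilde f$ preserves local geodesics, and the local-isometry property gives preservation of lengths. This is the step where the hypotheses are really used, and it is the main obstacle; everything around it is formal. (It is precisely the content of \cite[Prop.~4.14]{BH99}.)

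With this in hand, injectivity of $f_\ast$ follows quickly. For points $\tilde x_1,\tilde x_2\in\tilde X$ I would take the unique geodesic $\sigma$ between them (unique since $\tilde X$ is CAT(0)); then $\tilde f\circ\sigma$ is a local geodesic of $\tilde Y$ of the same length, hence an actual geodesic because $\tilde Y$ is CAT(0), so $d(\tilde f\tilde x_1,\tilde f\tilde x_2)=\ell(\sigma)=d(\tilde x_1,\tilde x_2)$. Thus $\tilde f$ is an isometric embedding, in particular injective, and if $g\in\ker f_\ast$ then $\tilde f(g\cdot\tilde p)=f_\ast(g)\cdot\tilde q=\tilde q=\tilde f(\tilde p)$ forces $g\cdot\tilde p=\tilde p$, so $g=1$ because $\pi_1(X,p)$ acts freely on $\tilde X$.

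For the last assertion, assume in addition that $f$ is surjective and each $f_v$ is an isomorphism. Then every $\tilde f_{\tilde v}$ is a link isomorphism; unwinding this at the link of every cube shows $\tilde f$ is an open map, so $\tilde f(\tilde X)$ is open in $\tilde Y$. But $\tilde f(\tilde X)$ is also a subcomplex of $\tilde Y$, hence closed, and $\tilde Y$ is connected and nonempty, so $\tilde f$ is surjective; being also injective, it is a bijective isometry $\tilde X\to\tilde Y$. Transporting the deck action of $\pi_1(X,p)$ across $\tilde f$ identifies it with the action of the subgroup $f_\ast\pi_1(X,p)\leq\pi_1(Y,q)$ on $\tilde Y$, under which $f$ becomes the natural quotient map $\tilde Y/f_\ast\pi_1(X,p)\to\tilde Y/\pi_1(Y,q)$; this is a covering map since $\pi_1(Y,q)$ acts freely and properly discontinuously on $\tilde Y$.
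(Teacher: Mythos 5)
Your proof is correct, but note that the paper does not actually prove this proposition: it is stated as a ``basic fact'' with a citation to \cite[Prop.~4.14]{BH99}, where the result is established for local isometries of general complete non-positively curved spaces by a metric development/Cartan--Hadamard-type argument. What you have written is the standard combinatorial specialization of that proof to cube complexes: lift to universal covers, use Gromov's lemma to see the links are CAT(1) all-right spherical complexes, and reduce the metric local-isometry property to the statement that a full subcomplex of an all-right spherical flag complex is isometrically embedded and $\pi$-convex. That last lemma is the real content and you quote it without proof, which is acceptable (it is standard, appearing e.g.\ in work of Crisp--Wiest and Haglund--Wise), but you should be aware it is doing all the work; likewise your criterion for a piecewise-affine path to be a local geodesic must in principle be checked at every point of the path, not only at vertices --- the condition at a point in the open face $F$ lives in $\lk(F)$, and one reduces it to the vertex case via the join decomposition of links. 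Neither issue is a gap in substance. The payoff of your route over the cited one is that it is self-contained within the cubical category and makes visible exactly where flagness and fullness of the link images are used; the payoff of the paper's citation is generality and brevity. The deduction of injectivity of $f_\ast$ from the isometric embedding of universal covers, and the covering-space statement via openness and closedness of the image when the link maps are isomorphisms, are both correct and routine.
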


Consequently, if in \cref{inducedmaponCC}, $Y$ is a CAT(0) cube complex
(and so simply connected),
then so is $X$, and the map $f$ is an isometric embedding,
provided that $X$ is connected.

Let $(Y,q)$ be a compact, connected, basepointed non-positively curved cube complex
and $G = \pi_1(Y,q)$.
In this section we describe a method that takes as input a
subgroup $H \le G$ and a generating set $S$ for $H$ represented as a set of loops in $Y^{(1)}$ based at $q$,
and produces a local isometry of non-positively curved cube complexes
$f\colon (X,p) \to (Y,q)$ such that $f_\ast(\pi_1(X,p)) = H$.
In the case that $H$ is a quasi-convex subgroup and $S$ is a finite generating set,
then the method is an algorithm which terminates in finite time.

Let $|S| = n$. 
Let $X$ be a wedge of $n$ circles with basepoint $b$. 
We can obtain a map of $X$ into $Y$ 
whose image on fundamental groups is $H$.
Subdivide each edge of the graph so that the resuling map is cubical.
At this stage we have a graph $(X,p)$ with a cubical map
$f\colon (X,p) \to (Y,q)$ such that $f_\ast(\pi_1(X,p)) = H$.

The idea, as in \cite{DaniLev}, is to progressively alter $X$
so that the map $f\colon(X,p) \to (Y,q)$ becomes closer and closer
to a local isometry.
In the direct limit, a local isometry is always achieved,
but if $H$ is assumed to be quasi-convex, then a finite number of operations suffice.

\paragraph{Folding}
Suppose first that the restriction of $f$ to the $1$-skeleton of $X$ is not an immersion.
If this is the case, then there are a pair of edges $e_1$ and $e_2$
with a common initial vertex $v$ such that $f(e_1) = f(e_2)$.
The map $f\colon (X,p) \to (Y,q)$ factors through the quotient map
identifying $e_1$ with $e_2$.
Replacing $X$ with this quotient cube complex is the operation of \emph{folding.}
By repeatedly folding when necessary, 
we may replace any cubical map $f\colon (X,p) \to (Y,q)$
with a map which restricts to an immersion of the $1$-skeleton of $X$ into $Y$.
Note that because folding decreases the number of edges of $X$,
if $X$ is compact, only finitely many folds are possible.

\paragraph{Cube Identification}
Suppose that in $X$, there are a pair of $n$-cubes
$c_1$ and $c_2$ whose $1$-skeleta are equal.
Since $Y$ is non-positively curved,
we conclude that $f(c_1) = f(c_2)$.
In order for $f$ to be a local isometry, we require $c_1 = c_2$.
Therefore we replace $X$ by the quotient cube complex obtained by identifying $c_1$ with $c_2$.
This process is \emph{cube identification;} 
one may think of it as a higher-dimensional analogue of folding.
As with folding, the number of cubes of $X$ decreases,
so if $X$ is compact, only finitely many cube identifications are possible.

In particular, by repeatedly folding and then repeatedly performing cube identifications,
we may assume that for each vertex $v \in X$,
the local map $f_v \colon \lk_X(v) \to \lk_Y(f(v))$ is injective.
However, for $f$ to be a local isometry,
we need the image $f_v(\lk_X(v))$ to contain each simplex it spans.
This leads us to our final operation.

\paragraph{Cube Attachment}
Suppose that $f$ fails to be a local isometry because the image of $f_v$
is not a full subcomplex. That is, there is a set of $n+1$ vertices of $\lk_X(v)$
whose image under $f_v$ span a $n$-simplex in $\lk_Y(f(v))$ 
but do not span a $n$-simplex in $\lk_X(v)$.
These vertices correspond to edges $e_1,\dotsc,e_{n+1}$ incident to $v$
whose image in $Y$ are the $n+1$ edges contained in an $(n+1)$-cube of $Y$
incident to a common vertex $f(v)$.
Call this $(n+1)$-cube $c$---note that it is determined 
by the edges $f(e_1),\dotsc,f(e_{n+1})$.
To remedy the situation in $X$
we attach an $(n+1)$-cube $c'$ to $v$
and the edges $e_1,\dotsc,e_{n+1}$.
The map $f$ extends uniquely to $c'$ by mapping $c'$ to $c$
preserving the images of the edges $e_1,\dotsc,e_{n+1}$.
This operation is \emph{cube attachment}.
If $X$ is compact (or more generally locally finite)
then only finitely many cube attachments may be performed at each vertex.

\paragraph{}
Observe that in performing each operation on $(X,q)$, there is a resulting
natural cubical map from the ``old'' cube complex to the ``new'' cube complex.
Thus by performing a sequence of foldings, cube identifications and cube attachments,
we obtain a sequence of basepointed cube complexes 
$(X,p) = (X_1,p_1) \to (X_2,p_2) \to \dotsb$. 
Let $(\hat X,\hat q)$ denote the direct limit of this sequence.
By the universal property of the direct limit,
$(\hat X,\hat p)$ is a cube complex equipped with a cubical map
$\hat f\colon (\hat X,\hat p) \to (Y,q)$.
We say that $\hat X$ is a \emph{completion} of $X$ if the map $\hat f$ is a local isometry.

\begin{thm}\label{completiondownstairs}
	Let $f\colon (X,p) \to (Y,q)$ be a cubical map, where $Y$ is compact
	and non-positively curved and $X$ is connected.
	There exists a completion of $X$.
\end{thm}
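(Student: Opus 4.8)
The plan is to produce the completion as the direct limit of a single sequence of moves, scheduled by a dovetailing argument so that in the limit no fold, cube identification, or cube attachment remains available; by the definition of a completion this is exactly what must be arranged. We may assume $X$ is countable (this is automatic in our applications, where $X$ is a finite graph, and in general after folding $X$ becomes locally finite --- its valence is bounded by that of the compact complex $Y$ once the $1$-skeleton map is an immersion --- hence, being connected, countable). Fix once and for all an enumeration of the cells appearing throughout the process. We build $(X,p) = (X_1,p_1) \to (X_2,p_2) \to \dotsb$ while maintaining an evolving to-do list of available moves: at stage $t$ we first append to the list every currently available move (fold, cube identification, or cube attachment) not already on it, in the order inherited from our enumeration, and then perform the earliest move on the list that is still legitimate, crossing it off together with any move rendered illegitimate by it. Each move yields a cubical map $(X_{t+1},p_{t+1}) \to (Y,q)$ through which the previous one factors, as explained in the discussion above. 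Let $(\hat X,\hat p)$ be the direct limit and $\hat f \colon (\hat X,\hat p) \to (Y,q)$ the induced cubical map.

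The heart of the matter is that this schedule is \emph{fair}: every move available at some finite stage is eventually performed or preempted. Granting this, I claim no move is available in $\hat X$. Suppose, say, a fold were available: edges $\bar e_1 \ne \bar e_2$ at a vertex $\bar v$ of $\hat X$ with $\hat f(\bar e_1) = \hat f(\bar e_2)$. These descend from some stage $X_N$ in which $\bar e_1, \bar e_2, \bar v$ and their common image already appear; at that stage, or after intervening identifications at a descendant vertex with the same image, a fold is available, and by fairness it --- or a further descendant --- is carried out, forcing $\bar e_1 = \bar e_2$, a contradiction. The same pattern rules out cube identifications (noting that such a move may only become available once enough folding has equated the relevant $1$-skeleta) and cube attachments: once the attachment is performed the missing $n$-simplex is installed in the link, and no subsequent move ever deletes a simplex from a link, so that simplex is present in $\hat X$ --- again a contradiction.

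It remains to convert ``no move available'' into ``$\hat f$ is a local isometry.'' That no fold is available says each $\hat f_{\bar v}$ is injective on the vertices of $\lk_{\hat X}(\bar v)$; together with the unavailability of cube identifications this gives, as observed in the discussion above, that every local map $\hat f_{\bar v}$ is injective. That no cube attachment is available says precisely that each image $\hat f_{\bar v}(\lk_{\hat X}(\bar v))$ is a full subcomplex of $\lk_Y(\hat f(\bar v))$. Hence $\hat f$ is a local isometry. Finally, since $Y$ is non-positively curved, each $\lk_Y(\hat f(\bar v))$ is a flag simplicial complex; as $\hat f_{\bar v}$ identifies $\lk_{\hat X}(\bar v)$ with a full subcomplex of it, and a full subcomplex of a flag complex is flag, every $\lk_{\hat X}(\bar v)$ is flag. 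Thus $\hat X$ is non-positively curved, and it is a completion of $X$.

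The genuinely delicate point, as flagged, is fairness. One cannot simply ``do all folds, then all cube identifications, then all cube attachments,'' since cube attachments introduce new cells and hence new potential defects (and, in the non-locally-finite case, there may be infinitely many folds available at once); the three operations must be interleaved. The dovetailing is designed exactly so that every move ever available --- including the late-arriving descendants of earlier moves --- receives its turn, and this is harmless precisely because folds and cube identifications are quotient maps, so progress is never undone, and because a simplex, once present in a link, is never removed. When $X$ is locally finite one may replace this bookkeeping by the more concrete schedule of exhausting $X$ by the finite combinatorial balls about $p$ and saturating each ball, in turn, under the three moves; termination of each such round follows from the finiteness statements recorded above together with the fact that cube attachments can add only boundedly many cubes to a fixed region.
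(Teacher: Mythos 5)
Your proposal is correct and follows essentially the same route as the paper: iterate the three moves, pass to the direct limit, and observe that any remaining defect in the limit is already visible at a finite stage and hence is eventually remedied by the schedule. Your explicit dovetailing bookkeeping (and the closing check that links stay flag) is simply a more careful version of the paper's round-robin schedule, which the paper only details in the compact case.
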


\begin{proof}
	We are interested in the case where $(X,p)$ is compact, so we shall assume $(X,p)$ is compact.
	The general case presents no real new difficulties, 
	except that one may have to pass to direct limits multiple times in the process
	of constructing a completion.
	Consider the sequence
	\[
		\begin{tikzcd}
			(X,p) = (X_1,p_1) \ar[r] & (X_2,p_2) \ar[r] & (X_3,p_3) \ar[r] & \dotsb
		\end{tikzcd}
	\]
	defined by performing the following procedure:
	Begin by performing all possible folds,
	then perform all possible cube identifications.
	If $(X_{k+1},p_{k+1})$ is obtained from $(X_k,p_k)$
	by attaching a cube, then $(X_k,p_k)$ is naturally a subcomplex of $(X_{k+1},p_{k+1})$.
	If $(X_k,p_k)$ admits no folds or cube identifications,
	we thus perform all possible cube attachments at the various vertices of $(X_k,p_k)$.
	This done, we repeat the process beginning with folding.

	Let $(\hat X,\hat p)$ denote the direct limit of this sequence.
	We claim that $(\hat X,\hat p)$ is a completion of $(X,p)$.
	Suppose $c_1$ and $c_2$ are cubes incident to a vertex $v$,
	and that $\hat f(c_1) = \hat f(c_2)$.
	Then there exists $k$ such that we may identify $c_1$ and $c_2$
	as belonging to $X_k$, and we must have that $f_k\colon (X_k,p_k) \to (Y,q)$
	satisfies $f_k(c_1) = f_k(c_2)$.
	This implies that at some later stage $k'$, after performing foldings and cube identifications,
	the images of $c_1$ and $c_2$ in $X_{k'}$ are equal.
	Thus we see that each $\hat f_v$ is injective.

	To see that its image is a full subcomplex, we argue similarly:
	for every collection of edges $e_1,\dotsc e_{n+1}$ incident to $v$
	whose image under $\hat f_v$ span an $n$-simplex of $\lk_Y(\hat f(v))$,
	there exists $k$ such that we may identify $e_1,\dotsc, e_{n+1}$ as edges of $X_k$
	incident to $v$.
	But this implies that at some later stage $k'$,
	possibly after performing cube attachments, there exists an $(n+1)$-cube $c$ in $X_{k+1}$
	having $e_1,\dotsc,e_{n+1}$ as faces.
	This implies that in $\lk_X(v)$ in $\hat X$, the vertices corresponding to $e_1,\dotsc, e_{n+1}$
	span an $n$-simplex, and the process of cube identification ensures that this $n$-simplex is unique.
	Thus we see that $\hat f$ is a local isometry.
\end{proof}

We will refer to the completion obtained by the method described in the proof as the \emph{standard}
completion of $X$. Later we will show that this completion coincides with any completion of $X$.

\begin{prop}
	Let $G$ be a right-angled Coxter group. Let $H \le G$ be a finitely generated
	torsion-free subgroup, and let $X$ be a boquet of circles in one-to-one correspondence
	with the generators of $H$. Then the completion of $\hat X$ described above
	is the same as the standard completion $\Omega$ from \cite{DaniLev}.
\end{prop}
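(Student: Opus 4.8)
The plan is to recognize Dani--Levcovitz's completion $\Omega$ as a completion of $X$ in the sense of this section, and then to invoke the uniqueness of completions. First I would fix the setting. Since $H$ is torsion-free it acts freely on the Davis complex $\Sigma_\Gamma$, a locally finite CAT(0) cube complex, so the quotient $Y := \Sigma_\Gamma/H$ (or, if one insists on compactness, the quotient by a torsion-free finite-index subgroup of $G$ together with the induced finite-index subgroup of $H$) is a nonpositively curved cube complex to which the construction of this section applies, with the bouquet $X$ mapping to $Y$ so as to classify the chosen generators of $H$; only local finiteness of $Y$, not compactness, is needed for the direct-limit construction. With this in place, both $\hat X$ and $\Omega$ are built from the same $X$, are nonpositively curved, and carry local isometries to $Y$ that restrict to the given map on $X$; by \cref{inducedmaponCC} the induced maps on $\pi_1$ are injective, and their images contain the chosen generators and are contained in $\pi_1(Y)=H$, hence equal $H$. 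So once we know $\Omega$ is a completion of $X$, uniqueness of completions forces $\hat X\cong\Omega$ over $Y$; alternatively and more conceptually, by \cref{cubicalconvexhull} the universal cover of $\hat X$ maps isometrically onto the cubical convex hull in $\Sigma_\Gamma$ of the lifted generators, the same holds for $\tilde\Omega$ by the minimality of the (canonical) Dani--Levcovitz completion, and quotienting the common image by $H$ gives $\hat X\cong\Omega$.

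So the substance is to verify that $\Omega$ really is a completion of $X$ in our sense. Dani--Levcovitz build $\Omega$ from $X$ through a sequence of elementary moves on edge-labeled graphs and the $\Gamma$-complexes they determine: Stallings folds; moves that adjoin squares — and, in the higher-dimensional version, cubes — forced by pairwise commuting Coxeter generators meeting at a vertex; and moves that identify two such squares or cubes sharing a common boundary. I would set up a dictionary between their moves and ours: their fold is literally our \emph{folding}; adjoining the square spanned by a commuting pair is our \emph{cube attachment} in dimension two, with the general-position analogue giving \emph{cube attachment} in arbitrary dimension; and identifying two parallel squares or cubes is our \emph{cube identification}. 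Conversely, because the cubes of $\Sigma_\Gamma$ are exactly the ``commuting-generator'' cubes, every cube that our procedure attaches or identifies is witnessed by one of their configurations, so the moves available in the two frameworks agree at every stage; in particular $\Omega$ is obtained from $X$ by a sequence of our moves.

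It then remains to check that $\Omega$ admits no further folds, cube attachments, or cube identifications — equivalently, that the map $\Omega\to Y$ is a local isometry. This is essentially the defining property of a Dani--Levcovitz completion (their notion of a \emph{complete} graph), once one verifies the routine fact that, for a cubical map into a quotient of the Davis complex, injectivity of each link map together with fullness of its image is equivalent to their combinatorial completeness condition: both simply say that whenever the generators labeling the edges at a vertex pairwise commute, the corresponding cube is present and is unique. Granting this, $\Omega$ is a completion of $X$, and the first paragraph concludes the argument.

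The main obstacle I anticipate is precisely this dictionary. Dani--Levcovitz phrase everything in terms of labeled graphs, cone-offs, and special subgraphs tailored to the combinatorics of right-angled Coxeter groups, whereas our construction is intrinsic, in terms of links and local isometries; translating faithfully — including the bookkeeping that passes from $\Sigma_\Gamma$ with its $G$-action down to the (possibly non-compact) quotient on which the completions actually live, and matching the two notions of ``move'' closely enough that $\Omega$ is literally reached by our procedure rather than merely equivalent to its output — is where the care will be needed.
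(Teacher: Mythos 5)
Your overall route --- set up a dictionary identifying Dani--Levcovitz's three moves with folding, cube attachment, and cube identification, observe that $\Omega$ is therefore a completion of $X$ in the sense of this paper, and conclude by uniqueness of completions (or the convex-hull characterization of \cref{cubicalconvexhull}) --- is in substance the same as the paper's, which simply asserts that ``these two methods are identical.'' Where you genuinely diverge is in how you handle the one technical obstruction, namely that the natural target is $Y=\Sigma_\Gamma/G$ with $G$ the full right-angled Coxeter group, which has torsion, so that $Y$ is not a non-positively curved cube complex with $\pi_1(Y)=G$ and the framework of \cref{mainthm} does not literally apply. You propose to dodge this by replacing the target with $\Sigma_\Gamma/H$ or with the quotient by a torsion-free finite-index subgroup; the paper instead keeps $Y=\Sigma_\Gamma/G$ and observes that, because $G$ acts freely on the \emph{vertices} of the Davis complex, the links $\lk_Y(f(v))$ and hence the local maps $f_v$ are still well-defined, which is all that the three moves ever consult. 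Both devices work, and for the same underlying reason (all of $\Sigma_\Gamma$, $\Sigma_\Gamma/H$, and $\Sigma_\Gamma/G$ have canonically isomorphic vertex links, so the moves are insensitive to the choice), but your version incurs the extra bookkeeping you yourself flag: Dani--Levcovitz's moves are phrased in terms of Coxeter-generator labels, i.e.\ relative to the $G$-quotient, so after changing the target you must still argue that ``same image edge in $\Sigma_\Gamma/H$'' agrees with ``same Coxeter label,'' which is again the free-action-on-vertices observation in disguise. The paper's approach buys a shorter argument and isolates exactly where torsion-freeness of $H$ enters (in the absence of order-two elements in $H$ the Dani--Levcovitz operations need no special cases); your approach buys a statement that stays entirely within the hypotheses of the completion theorems as literally proved, at the cost of the translation step. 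Neither is a gap, but you should state explicitly that the identification of moves is a local statement at vertices and that the relevant vertex links agree --- that one sentence is the actual content of the paper's proof.
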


\begin{proof}
	In \cite{DaniLev}, there are three operations applied to obtain a completion.
	There, the role of $Y$ is played by the quotient of the \emph{Davis complex} $\tilde Y$ by the action of $G$.
	Because $G$ contains torsion, $G$ is not the usual fundamental group of $Y$.
	Nevertheless, because $G$ acts freely on the vertices of $\tilde Y$
	there is a well-defined local map $f_v$ for each vertex 
	of $X$, and we may use the data of this local map to perform the completion as in our theorem.
	In the absence of elements of order two, these two methods are identical.
\end{proof}

\begin{remark}
	A more sophisticated  approach to the above proposition might consider the quotient of the Davis complex
	as a \emph{complex of groups,}
	and perform our operations on a map $f\colon(X,p) \to (Y,q)$ of complexes of groups.
	This approach could allow one to consider arbitrary cubulated groups
	and subgroups with torsion.
  \end{remark}

\section{Constructing Completions in the Universal Cover}

We now present another look at the completion,
using the hyperplanes of the universal cover to construct a completion.

Let us recall: a \emph{midcube} in an $n$-cube $[-1,1]^n$ is the set of points
with $k$th coordinate $0$ for some fixed $k$ satisfying $1 \le k \le n$.
Let $X$ be a cube complex.
We say two edges of $X$, $e$ and $e'$ are \emph{related}
if they intersect a common midcube, and let $\sim$ denote the equivalence relation
on the set of edges generated by the above relation.
The equivalence classes of $\sim$ are the \emph{hyperplanes} of $X$.
By gluing together the corresponding common midcubes,
one may think of each hyperplane of $X$ as a (possibly immersed) subspace $\mathfrak{h} \subset X$.
We say a hyperplane is \emph{dual} to each edge in the corresponding $\sim$-equivalence class.
In a CAT(0) cube complex $X$, hyperplanes are convexly embedded; 
each hyperplane $\mathfrak{h}$ separates $X$ into two components, $H^+$ and $H^-$,
called \emph{half-spaces},
and in each cube $\mathfrak{h}$ meets, $\mathfrak{h}$ is a subspace of codimension one. 

Hyperplanes turn out to be key to the study of CAT(0) cube complexes.
In particular, if $X$ is a CAT(0) cube complex,
consider the path metric on its $1$-skeleton;
this is the \emph{combinatorial} or \emph{$\ell_1$ metric.}
A subcomplex $Y$ of $X$ is \emph{cubically convex}
if 
\begin{enumerate}
	\item Given vertices $v$ and $w$ of $Y$,
		the $1$-skeleton of $Y$ contains every combinatorial geodesic (shortest path)
		from $v$ to $w$.
	\item If the $1$-skeleton of an $n$-cube is contained in $Y$,
		then the $n$-cube is contained in $Y$.
\end{enumerate}
If $S$ is a subset of $X$, its \emph{cubical convex hull}
is the smallest cubically convex subcomplex of $X$ containing $S$.
Let $\gamma$ be a combinatorial geodesic between vertices $v$ and $w$. 
Since hyperplanes are separating we see that $\gamma$ crosses every hyperplane which separates $v$ from $w$. 
Moreover, a path $\zeta$ in the 1-skeleton is a combinatorial geodesic $v$ to $w$ if and only the hyperplanes $\zeta$ crosses are exactly those separating $v$ from $w$ and each is crossed exactly once. 

Let $f\colon(X,p) \to (Y,q)$ be a basepointed cubical map.
Let $\tilde X$ and $\tilde Y$ denote the universal covers of $X$ and $Y$, respectively.
The map $f$ induces a map of universal covers
$\tilde f \colon (\tilde X,\tilde p) \to (\tilde Y,\tilde q)$.
We shall use the hyperplanes of $\tilde Y$ to construct a completion of $X$.

Here is the idea:
if the map $f$ were a local isometry, then the map $\tilde f$ would recognize
$\tilde X$ as a cubically convex subcomplex of $\tilde Y$.
If not, we will essentially replace $\tilde X$ with the cubical convex hull of its image.

\paragraph{A Completion via Sageev's Construction}
Let $\mathfrak{h}$ be a hyperplane of $\tilde Y$. If $\mathfrak{h} \cap \tilde f(\tilde X)$ is nonempty,
then both of the corresponding half-spaces
$H^+$ and $H^-$ have nonempty intersection with $\tilde f(\tilde X)$.
The collection of half-spaces of $\tilde Y$ which meet $\tilde f(\tilde X)$
form a \emph{poset with complementation} see \cite[p. 11]{Sageev} or \cite{Roller}.
The poset structure is given by inclusion (in $\tilde Y$), 
and the order-reversing involution is given by the complementation $H^+ \leftrightarrow H^-$.
Call this collection $\mathscr{H}$.
It is a sub-poset of the collection of all half-spaces of $\tilde Y$.
Let $Z$ be the \emph{dual CAT(0) cube complex} obtained by applying
Sageev's construction \cite{SageevEnds} to $\mathscr{H}$.
Recall that this means $Z$ has the following description.
A vertex $v$ of $Z$ is a consistent choice of orientation (or half-space)
for each hyperplane in $\mathscr{H}$.
That is, $v$ is a subset of $\mathscr{H}$ satisfying the following three conditions.
\begin{enumerate}
	\item[\textbf{(Choice)}] For each hyperplane $\mathfrak{h}$ such that $H^+$ and $H^-$ belong to $\mathscr{H}$,
		exactly one of $H^+$ or $H^-$ belongs to $v$.
	\item[\textbf{(Upward Closed)}] If $H$ and $H'$ are half-spaces satisfying $H \subset H'$
		such that $H$ belongs to $v$, then $H'$ belongs to $v$.
	\item[\textbf{(Descending Chains)}] If $H_0 \supset H_1 \supset H_2 \supset \dotsb$
		is a descending chain of half-spaces belonging to $v$,
		then this chain \emph{terminates}---there is some integer $N$
		such that $H_n = H_N$ for all $n \ge N$.
\end{enumerate}

The key example to keep in mind is the collection of all half-spaces in a CAT(0) cube complex
containing a given vertex:
the reader may wish to verify that this collection satisfies the three conditions above.

An \emph{edge} of $Z$ connects two vertices $v$ and $w$ when they differ by a single choice of halfspace.
This means that there is some hyperplane $\mathfrak{h}$ such that one half-space $H^+$ belongs to $v$, say,
while $H^-$ belongs to $w$---otherwise a half-space belongs to $v$ if and only if it belongs to $w$.
This defines the $1$-skeleton of $Z$; 
to finish we declare that a higher-dimensional cube is present if and only if its $1$-skeleton is.
The result is a CAT(0) cube complex (see \cite[Section 2]{Sageev}).
The half-spaces in $\mathscr{H}$ are canonically identified with the half-spaces 
associated to the hyperplanes of $Z$.
This duality between posets with complementation and CAT(0) cube complexes goes by the name \emph{Roller duality;}
we shall make light use of it in what follows.

\paragraph{The Completion is the Cubical Convex Hull} 
The CAT(0) cube complex $Z$ comes equipped with a map $\hat f\colon Z \to \tilde Y$
defined in the following way.
Consider the collection of half-spaces of $\tilde Y$ containing the basepoint $\tilde q$.
Since $\tilde q$ is in the image of $\tilde f$,
the restriction of this collection to $\mathscr{H}$ defines a vertex of $Z$.
Call this vertex $\hat p$ and define $f(\hat p) = \tilde q$.
The collection of half-spaces of $Z$ is $\mathscr{H}$, 
which is a subset of the collection of half-spaces of $\tilde Y$.
Let $z$ be a vertex of $Z$.
We claim that there exists a unique vertex $\hat f(z)$ of $\tilde Y$
in the intersection of the following collection of half-spaces of $\tilde Y$
\[ \{ H \in \mathscr{H} \mid H \text{ belongs to } z \} \cup \{ H \notin\mathscr{H} \mid \tilde q \in H \}. \]

To see this, it suffices to show that this collection of half-spaces satisfies the upward-closed condition.
So suppose $H$ and $H'$ are half-spaces satisfying $H \subset H'$,
and suppose $H$ belongs to the above collection.
Recall that $H$ belongs to $\mathscr{H}$ if and only if $H \cap \tilde f(\tilde X)$ is nonempty.
Therefore if we have $H \in \mathscr{H}$, then $H'$ is in $\mathscr{H}$ as well,
and we conclude that $H'$ belongs to $z$ as required.
On the other hand, if $H$ does not belong to $\mathscr{H}$,
then its complement has empty intersection with $\tilde f(\tilde X)$,
as does the complement of $H'$. 
Therefore $H'$ does not belong to $\mathscr{H}$ but does contain $\tilde q$, as required.

\begin{proposition}\label{completionupstairs}
	The map $\hat f \colon (Z,\hat p) \to (\tilde Y,\tilde q)$ is an isometric embedding.
	Its image is the cubical convex hull of $\tilde f(\tilde X)$ in $\tilde Y$.
\end{proposition}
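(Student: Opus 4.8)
The plan is to prove the two assertions --- that $\hat f$ is an isometric embedding and that its image is the cubical convex hull of $\tilde f(\tilde X)$ --- in that order, since injectivity of $\hat f$ on vertices is the technical heart and the convex-hull identification follows by a hyperplane-combinatorial argument once we know the map behaves well.

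First I would record the basic structural consequence of the construction that precedes the statement: the half-spaces of $Z$ are canonically identified (by Roller duality) with the poset $\mathscr{H}$, a sub-poset-with-complementation of the half-spaces of $\tilde Y$. Under this identification, the map $\hat f$ sends a vertex $z$ of $Z$ to the unique vertex of $\tilde Y$ lying in $\{H \in \mathscr{H} : H \text{ belongs to } z\} \cup \{H \notin \mathscr{H} : \tilde q \in H\}$ --- existence and upward-closedness were just checked, and uniqueness of a vertex determined by a consistent choice of all half-spaces in a CAT(0) cube complex is standard. I would then check that $\hat f$ is a cubical map (it sends edges to edges because crossing a single hyperplane of $Z$ corresponds to flipping a single half-space of $\mathscr{H} \subset$ half-spaces of $\tilde Y$, hence to crossing a single hyperplane of $\tilde Y$; higher cubes follow since $Z$ and $\tilde Y$ are both flag). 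To see $\hat f$ is an isometric embedding in the combinatorial metric, note that the combinatorial distance between two vertices of a CAT(0) cube complex equals the number of hyperplanes separating them; two vertices $z, z'$ of $Z$ are separated by exactly the hyperplanes in $\mathscr{H}$ on which they disagree, and their images $\hat f(z), \hat f(z')$ are separated in $\tilde Y$ by exactly the half-spaces on which the two associated collections disagree --- which are precisely the elements of $\mathscr{H}$ on which $z$ and $z'$ disagree, since outside $\mathscr{H}$ both collections agree (they both take the side containing $\tilde q$). Hence $d(z,z') = d(\hat f(z), \hat f(z'))$, giving injectivity on vertices and the isometric embedding property; convexity of the image as a subcomplex is then automatic since a combinatorial isometric embedding of CAT(0) cube complexes that respects hyperplane-crossing data has cubically convex image (one can also see directly that a geodesic in $\tilde Y$ between two image vertices crosses only hyperplanes in $\mathscr{H}$, hence lifts).

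Next I would identify the image with the cubical convex hull of $\tilde f(\tilde X)$. The image $\hat f(Z)$ is cubically convex and contains $\tilde f(\tilde X)$: every vertex of $\tilde f(\tilde X)$ is the image of the vertex of $Z$ given by the half-spaces of $\mathscr{H}$ containing it (this is a consistent choice because it extends to a vertex of $\tilde Y$), and $\tilde f$ is a cubical map so $\tilde f(\tilde X) \subseteq \hat f(Z)$. Conversely, the cubical convex hull $C$ of $\tilde f(\tilde X)$ is a cubically convex subcomplex of $\tilde Y$, and a cubically convex subcomplex is itself determined by the half-spaces it meets: a vertex $w$ of $\tilde Y$ lies in $C$ iff for every hyperplane $\mathfrak{h}$ not meeting $\tilde f(\tilde X)$, $w$ lies on the same side as $\tilde f(\tilde X)$ --- equivalently, iff the collection of half-spaces containing $w$ agrees with the ``$\tilde q$-side'' on every half-space outside $\mathscr{H}$. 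But that collection, restricted to $\mathscr{H}$, is a vertex of $Z$ (it satisfies Choice, Upward Closed, and Descending Chains as a sub-collection of the half-spaces at $w$), and $\hat f$ of that vertex is $w$. So $C \subseteq \hat f(Z)$, and we conclude $\hat f(Z) = C$.

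The main obstacle I anticipate is the argument that a vertex $w \in \tilde Y$ lies in the cubical convex hull $C$ of $\tilde f(\tilde X)$ if and only if $w$ and $\tilde f(\tilde X)$ are not separated by any hyperplane missing $\tilde f(\tilde X)$ --- i.e., that $C$ is exactly the intersection of all half-spaces of $\tilde Y$ containing $\tilde f(\tilde X)$. The inclusion of $C$ into that intersection is clear since each such half-space is cubically convex; the reverse inclusion requires showing that the intersection of those half-spaces is cubically convex and is the \emph{smallest} cubically convex set containing $\tilde f(\tilde X)$, which amounts to the standard fact that in a CAT(0) cube complex convex subcomplexes are precisely (combinatorially-convex) intersections of half-spaces, together with a check that the vertex set picked out by the half-space condition spans a genuine subcomplex (using the flag condition and that $\tilde f(\tilde X)$ already carries the relevant cube structure). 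I would isolate this as a lemma, or cite it from \cite{Sageev} or \cite{Roller}, since it is the one place where a genuinely geometric (as opposed to bookkeeping) input about CAT(0) cube complexes is needed; everything else is tracking half-space data through Roller duality.
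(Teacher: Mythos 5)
Your proposal is correct in substance but takes a genuinely different route from the paper on the first assertion. The paper proves that $\hat f$ is an isometric embedding by checking it is a \emph{local} isometry --- vertices of $\lk_Z(v)$ correspond to edges dual to hyperplanes of $\mathscr{H}$, which $\hat f$ sends to themselves, giving link-injectivity, and fullness of the image is built into the flag construction of $Z$ --- and then invoking the local-to-global principle recorded after \cref{inducedmaponCC}. You instead argue globally: distances in both complexes are counted by separating hyperplanes, and two vertices of $Z$ disagree exactly on the hyperplanes of $\mathscr{H}$ on which their images disagree, since both images take the $\tilde q$-side outside $\mathscr{H}$. Both arguments are sound; yours has the advantage of making the convexity of the image transparent via your parenthetical (a geodesic between image vertices crosses only $\mathscr{H}$-hyperplanes, so each of its vertices restricts to a vertex of $Z$), whereas the paper gets convexity from local convexity. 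For the hull identification, you correctly isolate the real content --- that the cubical convex hull of a set equals the intersection of all half-spaces containing it --- as a lemma to be proved or cited; the paper's own proof of this half of the statement is considerably terser and leans on essentially the same fact without naming it.

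One bookkeeping slip you should fix: in your final paragraph, both inclusions you actually write down are $C \subseteq \hat f(Z)$ (first because $\hat f(Z)$ is convex and contains $\tilde f(\tilde X)$, then again by building a preimage in $Z$ for each vertex of $C$), so as written you never derive $\hat f(Z) \subseteq C$ and the concluding ``we conclude $\hat f(Z) = C$'' does not follow. The missing direction is immediate from your flagged lemma: by construction every vertex in the image of $\hat f$ lies in every half-space of $\tilde Y$ that contains $\tilde f(\tilde X)$ (outside $\mathscr{H}$ it takes the $\tilde q$-side, which is the side containing $\tilde f(\tilde X)$; inside $\mathscr{H}$ there is no constraint), hence lies in the intersection of those half-spaces, which your lemma identifies with $C$. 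State that sentence explicitly and the proof is complete.
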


\begin{proof}
	To prove the former statement, it suffices to show that $\hat f$ is a local isometry.
	So let $v \in Z$ be a vertex. We want to show that the map
	$\hat f_v\colon \lk_Z(v) \to \lk_{\tilde Y}(\hat f(v))$ is injective,
	and that its image is a full subcomplex of $\lk_{\tilde Y}(\hat f(v))$.
	Indeed, vertices of $\lk_Z(v)$ correspond to edges dual to hyperplanes of $\mathscr{H}$.
	The map $\hat f$ takes hyperplanes in $\mathscr{H}$ to themselves, so the map is injective.
	A set of vertices spans a simplex in $\lk_{\tilde Y}(\hat f(v))$ by definition 
	if and only if they span a simplex in $\lk_Z(v)$, so the image is a full subcomplex.

	Since the map $\hat f$ is an isometric embedding, its image is cubically convex in $\tilde Y$
	and contains $f(X)$ by definition.
	Conversely, any combinatorial geodesic between points of $f(X)$ 
	crosses only hyperplanes in $\mathscr{H}$,
	so is contained in the image of $\hat f$.
\end{proof}

\section{Equivalences}
The main result of this section is the following theorem, which refines \cref{cubicalconvexhull}.

\begin{thm}
	Let $f\colon (X,p) \to (Y,q)$ be a basepointed cubical map, and write
	\linebreak $H = f_\ast(\pi_1(X,p)) \le \pi_1(Y,q)$.
	Let $(\hat X,\hat p)$ denote the completion in \cref{completiondownstairs}
	with local isometry $\hat f\colon (\hat X,\hat p) \to (Y,q)$.
	and let $Z$ denote the cube complex obtained from $\tilde f(\tilde X)$
	in \cref{completionupstairs} isometrically embedded in $\tilde Y$.

	Then $Z/H = \hat X$, or equivalently $Z$ is the universal cover of $\hat X$.
\end{thm}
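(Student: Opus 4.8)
The strategy is to build an $H$-equivariant isometry between $Z$ (with its deck action) and the universal cover $\tilde X_H$ of $\hat X$, identifying the quotient $Z/H$ with $\hat X$. There are two natural candidate maps to compare: the local isometry $\hat f \colon \hat X \to Y$ from \cref{completiondownstairs}, which lifts to an isometric embedding $\tilde{\hat f}\colon \tilde X_H \to \tilde Y$ by \cref{inducedmaponCC} and the remark following it; and the isometric embedding $\hat f\colon Z \to \tilde Y$ from \cref{completionupstairs}. I would show these two embeddings have the same image in $\tilde Y$, and that $H = f_\ast(\pi_1(X,p))$ acts on that common image compatibly with the two descriptions, so that the induced isometry $Z \to \tilde X_H$ is $H$-equivariant and descends to the claimed isomorphism $Z/H \cong \hat X$.

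\emph{First} I would identify the images. By \cref{completionupstairs}, $\hat f(Z)$ is the cubical convex hull of $\tilde f(\tilde X)$ in $\tilde Y$. By \cref{cubicalconvexhull} (which the present theorem is said to refine), the image of $\tilde{\hat f}\colon \tilde X_H \to \tilde Y$ is the cubical convex hull of the lift $\tilde\iota\colon T \to \tilde Y$ of $\iota$ through which $f$ factors — and since $T$ is the universal cover of $X$ mapping onto $\tilde f(\tilde X)$, these two convex hulls coincide. (If one does not want to invoke \cref{cubicalconvexhull} as a black box, one argues directly: $\tilde{\hat f}(\tilde X_H)$ is convex in $\tilde Y$ since $\tilde{\hat f}$ is an isometric embedding of a CAT(0) cube complex whose hyperplanes map to hyperplanes, it contains $\tilde f(\tilde X)$ because $X \hookrightarrow \hat X$ and the maps commute, and conversely any combinatorial geodesic between points of $\tilde f(\tilde X)$ lies in $\tilde{\hat f}(\tilde X_H)$ because a local isometry pulls back convex subcomplexes; compare the last paragraph of the proof of \cref{completionupstairs}.) Call this common subcomplex $W \subseteq \tilde Y$. \emph{Second}, both $\tilde{\hat f}$ and the map $\hat f$ out of $Z$ are isometric embeddings onto $W$, so $\rho := (\tilde{\hat f})^{-1}\circ \hat f\colon Z \to \tilde X_H$ is an isomorphism of cube complexes sending $\hat p$ to $\tilde p$.

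\emph{Third}, I would check $\rho$ is $H$-equivariant. The deck group of $\tilde X_H \to \hat X$ is $\pi_1(\hat X,\hat p)$, which maps isomorphically onto $H$ under $\hat f_\ast$ (local isometries are $\pi_1$-injective, \cref{inducedmaponCC}, and the image is $H$ since $X \to \hat X$ is a $\pi_1$-surjection by construction — folding, cube identification and cube attachment do not change $\pi_1$ in a way that shrinks the image, and in fact $\hat f_\ast\pi_1(\hat X) = f_\ast\pi_1(X) = H$). Under $\tilde{\hat f}$ this deck action is exactly the restriction to $W$ of the action of $H \le \pi_1(Y,q)$ on $\tilde Y$ by deck transformations. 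On the $Z$ side, $H$ acts on $\mathscr H$ — the poset of half-spaces of $\tilde Y$ meeting $\tilde f(\tilde X)$ — because $\tilde f(\tilde X)$ is $H$-invariant (it is the image of the universal cover of $X$, and $H$ acts by deck transformations compatibly with $f$), hence $H$ acts on the dual cube complex $Z$ by Roller duality, and the canonical map $\hat f\colon Z \to \tilde Y$ is $H$-equivariant by construction since it is defined purely in terms of half-spaces. Therefore $\rho$ intertwines the two $H$-actions, and passing to quotients gives $Z/H \xrightarrow{\ \cong\ } \tilde X_H / H = \hat X$; equivalently, since $Z$ is CAT(0) hence simply connected, $Z$ is the universal cover of $\hat X$.

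\emph{The main obstacle} I anticipate is the bookkeeping around the $H$-action on $\tilde f(\tilde X)$ and the claim $\hat f_\ast \pi_1(\hat X) = H$: strictly, $\tilde f(\tilde X)$ need not be the image of a genuine covering space unless $f|_{X}$ is already nice, so one must either work with the image of the lift $\tilde\iota\colon T\to\tilde Y$ of the factoring map (as in \cref{cubicalconvexhull}) and note that $T$ carries a free $H$-action since $H = f_\ast\pi_1(X,p)$, or observe that $\pi_1(X,p) \to \pi_1(\hat X,\hat p)$ has image mapping onto $H$ and kernel exactly the part killed by the moves, so that $\mathscr H$ is literally $H$-invariant. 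Once the $H$-invariance of $\mathscr H$ and the equality $\hat f_\ast\pi_1(\hat X)=H$ are pinned down, the rest is formal: uniqueness of the vertex $\hat f(z)$ and of lifts forces $\rho$ to be the unique $H$-equivariant isometry extending $\hat p \mapsto \tilde p$, and everything descends.
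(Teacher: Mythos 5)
Your overall strategy is the same as the paper's: lift $\hat f$ to an isometric embedding of the universal cover $U$ of $\hat X$ into $\tilde Y$, identify its image with the convex hull $Z$, and descend. However, there is a gap at the crucial containment $\tilde{\hat f}(U) \subseteq Z$. Your direct argument (the parenthetical in the ``First'' step) only establishes the easy direction: $\tilde{\hat f}(U)$ is convex and contains $\tilde f(\tilde X)$, hence contains the convex hull $Z$. It does not rule out that the completion is \emph{strictly larger} than the convex hull. Your other route is to invoke \cref{cubicalconvexhull} as a black box, but that is circular: the paper presents the present theorem as the refinement that \emph{proves} \cref{cubicalconvexhull}, and there is no independent proof of it elsewhere, so you cannot assume the image of $U$ already equals the convex hull of the image of $\tilde X$.

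The missing idea is the one the paper supplies via Roller duality: every hyperplane of $U$ is dual to an edge coming from $\tilde X$, so every half-space of $\tilde{\hat f}(U)$ lies in $\mathscr H$, and hence $U$ embeds into the cube complex dual to $\mathscr H$, which is $Z$. The reason no new hyperplanes appear is specific to the three moves: folding and cube identification only identify cells (so they can only merge hyperplanes), and cube attachment glues a new $(n+1)$-cube along edges $e_1,\dotsc,e_{n+1}$ that are already present, so each new edge of the attached cube is parallel within that cube to some $e_i$ and is therefore dual to a pre-existing hyperplane. By induction over the direct system, every hyperplane of $\hat X$ (hence of $U$) meets the image of $\tilde X$. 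You need to add this argument; once it is in place, your more careful treatment of the $H$-equivariance of the two embeddings and the descent to quotients is fine (and is in fact more explicit than what the paper writes down).
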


\begin{proof}
	Let $(U,\tilde p)$ denote the universal cover of $(\hat X,\hat p)$.
	Because the map $\hat f$ is a local isometry, its lift to an isometric embedding
	$\tilde f \colon (U,\tilde p) \to (\tilde Y,\tilde q)$.
	The image of $\tilde f$ contains the image of the universal cover $\tilde X$ of $X$, by definition,
	and is a convex subcomplex of $\tilde Y$.
	Since $Z$ is the cubical convex hull of the image of $\tilde X$ in $\tilde Y$,
	we have $Z\subset \tilde f(U)$.
	On the other hand, using Roller duality, we see that $U$ is contained in the cube complex
	obtained from the collection of half-spaces intersecting the image of $\tilde X$.
	That is, $\tilde f(U) \subset Z$, so we conclude $\tilde f(U) = Z$.
\end{proof}

We can use the previous theorem to prove the uniqueness of the completion obtained in \cref{completiondownstairs}.

\begin{thm}
	Let $Z$ and $Z'$ be completions of $f\colon (X,p) \to (Y,q)$.
	Then $Z = Z'$.
\end{thm}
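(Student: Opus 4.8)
The plan is to derive uniqueness from the previous theorem, which identifies the universal cover of \emph{a} completion with a cube complex depending only on $f$. The first point is that, although that theorem is phrased for the standard completion, its proof applies to an arbitrary completion $\hat X$ of $f$: it uses only that $\hat f\colon \hat X\to Y$ is a local isometry (so that, by \cref{inducedmaponCC}, it lifts to an isometric embedding $\tilde f\colon U\to\tilde Y$ of universal covers) and that the canonical map $X\to\hat X$ factors $f$ (so that $\tilde f(U)$ contains the image of $\tilde X$ in $\tilde Y$). Granting this, for any completion $\hat X$ the embedding $\tilde f$ identifies the universal cover $U$ of $\hat X$ with the cubical convex hull $C$ of $\tilde f(\tilde X)$ in $\tilde Y$, and $C$ manifestly depends only on $f\colon X\to Y$, not on the chosen completion.

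The second ingredient is that the deck-group data is also independent of the completion. Each of the three moves induces a $\pi_1$-surjection: folding and cube identification are cellular quotients that collapse a connected subcomplex or identify a pair of cubes, while a cube attachment glues a contractible cube to a contractible subcomplex and so is a homotopy equivalence. Hence $X\to\hat X$ is $\pi_1$-surjective, and together with the injectivity of $\hat f_\ast$ from \cref{inducedmaponCC} this gives $\hat f_\ast(\pi_1(\hat X,\hat p))=f_\ast(\pi_1(X,p))=H$. Since $\tilde f$ is equivariant for $\hat f_\ast$, the deck action of $\pi_1(\hat X)\cong H$ on $U\cong C$ is precisely the restriction to $H\le\pi_1(Y,q)$ of the deck action on $\tilde Y$.

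Combining these: if $Z$ and $Z'$ are two completions of $f$, then by the first paragraph their universal covers are both canonically the subcomplex $C\subseteq\tilde Y$, and by the second the two $H$-actions on $C$ coincide (each being the restriction of the action on $\tilde Y$). Therefore $Z$ and $Z'$ are both canonically identified with $C/H$ compatibly with the maps to $(Y,q)$; in particular $Z=Z'$.

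The step that needs real care — and the one the previous theorem's proof dispatches rather tersely — is the inclusion $\tilde f(U)\subseteq C$. (The reverse inclusion is immediate, since $\tilde f(U)\supseteq\tilde f(\tilde X)$ and $\tilde f(U)$ is convex.) I would argue it via hyperplanes: none of the three moves creates a new hyperplane. Folding and cube identification only merge or preserve hyperplanes, and the new edges produced by a cube attachment at $v$ along $e_1,\dots,e_{n+1}$ are each related, through a midcube of the attached $(n+1)$-cube, to one of the $e_i$, hence lie on an already-present hyperplane. Passing to the direct limit and then to universal covers, every hyperplane of $\tilde Y$ meeting $\tilde f(U)$ already meets $\tilde f(\tilde X)\subseteq C$; since $\tilde f(U)$ is convex, a gate argument (if some vertex of $\tilde f(U)$ lay outside $C$, the hyperplanes separating it from its gate in $C$ would meet $\tilde f(U)$ but not $C$) forces $\tilde f(U)=C$. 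The only genuinely tedious part is the direct-limit bookkeeping — a hyperplane or cube of $\hat X$ lives at a finite stage, which one traces back to $X$ — but this presents no essential difficulty.
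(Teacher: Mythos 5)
Your proposal is correct and follows essentially the same route as the paper: identify the universal covers of both completions, via the lifted isometric embeddings, with the cubical convex hull of $\tilde f(\tilde X)$ in $\tilde Y$ (the paper's key assertion being exactly your observation that every hyperplane of $\tilde Z$, $\tilde Z'$ meets $\tilde f(\tilde X)$ because none of the three moves creates a new hyperplane), and then conclude from equality of fundamental groups and universal covers. The paper states these steps tersely where you supply the justifications (the hyperplane bookkeeping, the $\pi_1$-surjectivity of the moves, and the equivariance of the identification), but no new idea is involved.
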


\begin{proof}
	Let $\tilde Z$ and $\tilde Z'$ denote the universal covers of $Z$ and $Z'$,
	thought of as cubically convex subcomplexes of $\tilde Y$.
	Note that every hyperplane of $\tilde Z$ and $\tilde Z'$ intersects
	$\tilde f(\tilde X)$. Thus we conclude that $\tilde Z$ and $\tilde Z'$
	are both the convex hull of $\tilde f(\tilde X)$ in $\tilde Y$.

	Since $\pi_1(Z) = \pi_1(Z')$, and the cube complexes $Z$ and $Z'$
	have the same universal cover, we conclude $Z = Z'$.
\end{proof}

\section{Consequences}

In this section, we collect group theoretic results following from our results on completions.
First let us complete the proof of \cref{mainthm}, which follows from the following theorem.
Throughout this section let $G = \pi_1(Y,q)$.

\begin{thm}\label{quasiconvexsubgroup}
	Let $T$ be a maximal tree in the $1$-skeleton of the finite nonpositively-curved cube complex $(Y,q)$. 
	Let $S$ be the set of edges not in $T$. 
	Thus $S$ is a generating set for $G$. 
	Let $H$ be a subgroup of $G$ which is quasiconvex with respect to the generating set $S$,
	and let $(X,p)$ be a finite graph with a map $f\colon (X,p) \to (Y,q)$ realizing
	the inclusion $H \hookrightarrow G$.
	Then the completion of $X$ is a finite cube complex.
\end{thm}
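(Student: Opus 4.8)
The plan is to leverage the identification from the previous section: the completion $\hat X$ has universal cover $\tilde{X}_H$ equal to the cubical convex hull of $\tilde f(T)$ inside $\tilde Y$, where $T$ is the universal cover of the finite graph $X$ (a tree, since $X$ is a graph realizing $H$, or more precisely the Bass--Serre-type tree on which $H$ acts with the lift of $f$). So to prove $\hat X$ is a finite cube complex, it suffices to show that $H$ acts cocompactly on this cubical convex hull; since $H$ acts freely and cellularly, the quotient will then be a compact, hence finite, non-positively curved cube complex. Equivalently, since $\hat X$ is finite-dimensional (its dimension is bounded by $\dim Y$, as $\hat f$ is cubical), I just need the cubical convex hull of $\tilde f(T)$ to contain only finitely many $H$-orbits of cubes, which reduces to finitely many $H$-orbits of vertices.

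First I would set up the metric comparison. The generating set $S$ (edges not in the maximal tree $T$) identifies $G$ with $\pi_1(Y,q)$ and gives a quasi-isometry between the Cayley graph $\Cay(G,S)$ and $\tilde Y^{(1)}$ with its combinatorial ($\ell_1$) metric, by the Milnor--Švarc lemma, with the orbit map $g \mapsto g\tilde q$ a quasi-isometric embedding. Under this, quasiconvexity of $H$ with respect to $S$ translates, up to adjusting constants, into the statement that the orbit $H\tilde q$ is a quasiconvex subset of $\tilde Y^{(1)}$: there is a constant $K$ so that every combinatorial geodesic in $\tilde Y$ between two points of $H\tilde q$ stays in the $K$-neighborhood of $H\tilde q$. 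Next I would observe that the image $\tilde f(T)$ is contained in a bounded neighborhood of $H\tilde q$: indeed $T$ is a tree on which $H$ acts cocompactly (it covers the finite graph $X$), so $\tilde f(T)$ lies within $\tilde f(T)$-diameter-of-a-fundamental-domain of the orbit $H\tilde f(\tilde p) = H\tilde q$ — a bounded neighborhood.

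Then comes the main point: the cubical convex hull of $\tilde f(T)$ lies in a bounded neighborhood of $H\tilde q$. The cubical convex hull is built from combinatorial geodesics between pairs of its points together with the ``fill in cubes whose skeleton is present'' operation; the first is controlled by quasiconvexity (a geodesic between two points of $\tilde f(T)$, hence within bounded distance of two points of $H\tilde q$, stays in a bounded neighborhood of $H\tilde q$ by the fellow-traveling estimate plus the quasiconvexity constant $K$), and the cube-filling operation only moves points a bounded amount (at most $\dim Y$). One has to check that iterating geodesic-taking and cube-filling does not push points arbitrarily far: the clean way is to note that in a CAT(0) cube complex the cubical convex hull of a set $A$ is exactly the intersection of all half-spaces containing $A$ — this is precisely the complex $Z$ of \cref{completionupstairs} — and then show directly that a vertex lying in every half-space containing $H\tilde q$ must be within bounded distance of $H\tilde q$. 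This last claim follows from quasiconvexity: if $v$ were far from $H\tilde q$, one finds a hyperplane separating $v$ from a large ball around its nearest orbit point, and one argues using the geodesic-fellow-traveling characterization of which hyperplanes a geodesic crosses that this hyperplane cannot separate any two points of $H\tilde q$, hence bounds a half-space containing $H\tilde q$ but not $v$ — contradiction.

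Once the cubical convex hull $\tilde{X}_H$ is inside an $R$-neighborhood of $H\tilde q$, cocompactness of the $H$-action is immediate: vertices of $\tilde{X}_H$ fall into finitely many $H$-orbits because $\tilde Y$ is locally finite (being the universal cover of a finite complex) so the $R$-ball around $\tilde q$ contains finitely many vertices, and every vertex of $\tilde{X}_H$ is $H$-translated to within that ball. Since $H$ acts freely and cocompactly on the finite-dimensional complex $\tilde{X}_H$, the quotient $\hat X = \tilde{X}_H / H$ is a finite non-positively curved cube complex, as desired. The main obstacle I expect is the third step — making precise and rigorous the claim that taking the cubical convex hull (equivalently, intersecting half-spaces) of a quasiconvex orbit stays within bounded distance of that orbit — since this is where one must genuinely combine the coarse quasiconvexity hypothesis with the exact combinatorial/hyperplane geometry of CAT(0) cube complexes; the reformulation of the convex hull as an intersection of half-spaces (already available via $Z$) is what makes this tractable.
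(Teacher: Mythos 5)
Your overall route is the same as the paper's: identify the universal cover of the completion with the cubical convex hull of $\tilde f(\tilde X)$ in $\tilde Y$ (the content of \cref{completionupstairs} and the section on equivalences), transfer quasiconvexity with respect to $S$ into combinatorial quasiconvexity of the orbit $H\tilde q$ in $\tilde Y^{(1)}$ by collapsing the lifts of the maximal tree, and then argue that the convex hull lies in a bounded neighborhood of the orbit, whence $H$ acts cocompactly and the quotient is a finite complex. The difference is that the paper disposes of the one substantive step---that the cubical convex hull of a quasiconvex subset of a CAT(0) cube complex stays at bounded Hausdorff distance and carries a cocompact action---by citing Haglund's Theorem~H, whereas you attempt to prove it directly, and it is exactly there that your sketch has genuine gaps.

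Two specific problems. First, the ``fellow-traveling estimate'' you invoke for combinatorial geodesics does not exist: in a CAT(0) cube complex two combinatorial geodesics with the same endpoints need not fellow-travel (in $\Z^2$ the two L-shaped geodesics from $(0,0)$ to $(n,n)$ are at distance about $n$ from one another), so the claim that a geodesic between points near $H\tilde q$ stays near $H\tilde q$ needs a median/hyperplane-counting argument rather than fellow-traveling. Second, your ``clean way'' rests on the unproved assertion that a vertex $v$ far from $H\tilde q$ is separated by a single hyperplane from a large ball around its nearest orbit point; no such hyperplane need exist in general, and deriving a contradiction from the hypothesis that $v$ lies in every half-space containing $H\tilde q$ is precisely the content of Haglund's theorem (the usual proof extracts, via Dilworth and the dimension bound on pairwise-crossing hyperplanes, a long nested chain of hyperplanes separating $v$ from its nearest orbit point, each of which must separate two orbit points, and plays this off against the quasiconvexity constant). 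If you either cite that theorem, as the paper does, or supply this argument in full, the remainder of your outline---local finiteness of $\tilde Y$, finitely many $H$-orbits of vertices in a bounded neighborhood of the orbit, hence a finite quotient---goes through.
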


To complete the proof of \cref{mainthm} assuming \cref{quasiconvexsubgroup}, note that because
the completion of $X$ is finite, the algorithm outlined in \cref{completiondownstairs}
will terminate in finite time.

Conversely, note also that if some completion of a realization of $H$ is finite,
then $H$ is a quasiconvex subgroup---thus quasiconvexity on fundamental groups may be recognized
by topological properties of the completion.

\begin{proof}[Proof of \cref{quasiconvexsubgroup}]
	The image of $\tilde f(\tilde X)$ in $\tilde Y$ is a quasiconvex subset.
	To see this, note that collapsing the preimage of the maximal tree in $\tilde Y$
	yields a complex whose $1$-skeleton is the Cayley graph of $G$ with respect to $S$,
	and that the image of $\tilde f(\tilde X)$ under this collapse contains the vertices
	belonging to $H$.

	From \cite[Theorem H]{Haglund} we see that the convex hull of $\tilde f(\tilde X)$ has a
	cocompact $H$-action whose quotient is a completion of $X$. 
	Thus the completion of $X$ is finite.
\end{proof}

Another property recognized by the completion is the property of $H$ having finite index in $G$.
In \cite{DaniLev}, some care needs to be taken in the case that the right-angled Coxeter group
has a nontrivial finite central subgroup. 
This happens exactly when 
the action of the right-angled Coxeter group on its Davis complex is not \emph{essential.}
Recall that if $G$ acts properly and cocompactly on a CAT(0) cube complex $\tilde Y$,
then there exists a subcomplex on which the action is essential.

\begin{thm}
	Let $G$ be a group acting freely, properly, cocompactly and 
	essentially\footnote{O'Donnell calls this kind of action a ``cube-awesome" action.} 
	on a CAT(0) cube complex $\tilde Y$, and let $Y = \tilde Y/G$.
	Let $H$ be a subgroup of $G$ and let $Z$ denote a corresponding completion of $H$.
	Then $H$ has finite index in $G$ if and only if $Z$ is a finite cover of $Y$.
\end{thm}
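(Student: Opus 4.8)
The plan is to prove the two implications separately, exploiting the equivalence (established in \cref{completionupstairs} and the theorem identifying $Z$ with $\tilde f(\tilde X)$'s cubical convex hull modulo $H$) between the completion $Z$ and the quotient by $H$ of the cubical convex hull of $\tilde f(\tilde X)$ in $\tilde Y$. The ``if'' direction is the easy one: if $Z \to Y$ is a finite cover, then by \cref{inducedmaponCC} it induces an injection $\pi_1(Z) = H \hookrightarrow \pi_1(Y) = G$ whose image has index equal to the number of sheets, which is finite; so $H$ has finite index in $G$. Here I would note that a local isometry which is a finite cover is in particular surjective, and that covering space theory gives $[G:H]$ equal to the degree of the cover.

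For the ``only if'' direction, suppose $[G:H] < \infty$. I would argue that the cubical convex hull $\tilde Z$ of $\tilde f(\tilde X)$ is all of $\tilde Y$. This is where essentiality enters and is the main obstacle: I need to know that a $G$-cocompact convex subcomplex of an essential CAT(0) cube complex $\tilde Y$, where the subcomplex is invariant under a finite-index subgroup, must be the whole complex. The mechanism is that $\tilde Z$ being convex means it is an intersection of half-spaces of $\tilde Y$; if $\tilde Z \ne \tilde Y$, then some half-space $H^+$ of $\tilde Y$ contains $\tilde Z$, i.e. the hyperplane $\mathfrak h$ does not meet $\tilde Z$. Now $\tilde f(\tilde X)$ contains the vertices of $\tilde Y$ corresponding to elements of a (right) coset system for $H$ in $G$ — more precisely, collapsing the preimage of the maximal tree identifies the $1$-skeleton with $\Cay(G,S)$ and $\tilde f(\tilde X)$ hits the $H$-orbit of $\tilde q$ — and since $[G:H]$ is finite, $\tilde f(\tilde X)$ is $G$-cobounded, i.e. within bounded Hausdorff distance of all of $\tilde Y^{(0)}$. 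A convex subcomplex within bounded distance of every vertex which omits an entire half-space contradicts essentiality: essentiality says every half-space contains points of any $G$-orbit arbitrarily far from its bounding hyperplane, so no half-space can be within bounded distance of the full vertex set's complement.

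Concretely, I would phrase the contradiction as: if $\mathfrak h$ is a hyperplane missed by $\tilde Z$, pick the deep side $H^-$ (the one disjoint from $\tilde Z$); essentiality provides a vertex $v$ in the $G$-orbit of $\tilde q$ with combinatorial distance from $\mathfrak h$ exceeding the (finite) covering radius of $\tilde f(\tilde X)$; but $v$ must then be within that radius of some vertex of $\tilde f(\tilde X) \subset \tilde Z \subset H^+$, and a vertex in $H^-$ that is $d$-close to a vertex in $H^+$ has distance at most $d$ from $\mathfrak h$ — contradiction. Hence $\tilde Z = \tilde Y$, so $Z = \tilde Y / H$ is a genuine cover of $Y = \tilde Y/G$, and its degree is $[G:H] < \infty$, so $Z$ is a finite cover of $Y$. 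The one delicate point to get right in the writeup is the precise sense in which $\tilde f(\tilde X)$ is coarsely dense in $\tilde Y$: this uses finiteness of $Y$ (so $\tilde X$ maps onto a cobounded set for the $H$-action) together with $[G:H] < \infty$ (so finitely many $G$-translates cover $\tilde Y^{(0)}$), and I would invoke the Milnor–Schwarz-type fact that $H$ then acts coboundedly on $\tilde Y$ as well.
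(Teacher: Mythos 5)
Your proof is correct, and for the substantive direction it takes a genuinely different route from the paper. Both arguments reduce to showing that when $[G:H]<\infty$, every hyperplane of $\tilde Y$ belongs to the family $\mathscr H$ of half-spaces meeting $\tilde f(\tilde X)$, so that the convex hull $\tilde Z$ is all of $\tilde Y$ and $Z=\tilde Y/H$. The paper gets this hyperplane-by-hyperplane: essentiality (via the skewering lemma of Caprace--Sageev) produces for each hyperplane an element of $G$ skewering it, a power of which lies in the finite-index subgroup $H$; this element pushes an $H$-orbit point to both sides of the hyperplane, so the hyperplane lies in $\mathscr H$. You instead argue globally: $\tilde f(\tilde X)$ contains the $H$-orbit of $\tilde q$, which is cobounded in $\tilde Y$ because $G$ acts cocompactly and $[G:H]<\infty$; a proper convex subcomplex is contained in some half-space, and a half-space whose complement contains orbit points arbitrarily far from the bounding hyperplane (which is exactly what essentiality supplies) cannot contain a cobounded set, since a vertex of $H^-$ within distance $d$ of a vertex of $H^+$ is within distance $d$ of the hyperplane. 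Your version works directly from the definition of essentiality and avoids invoking the skewering lemma (a nontrivial result the paper uses without proof or citation), at the cost of the extra coarse-geometric bookkeeping about covering radii; the paper's version is shorter once skewering is granted and pinpoints, for each individual hyperplane, an element of $H$ witnessing membership in $\mathscr H$. Your treatment of the easy direction (degree of the cover equals the index) matches what the paper leaves implicit. One small stylistic caution: reusing $H^{\pm}$ for half-spaces while $H$ denotes the subgroup invites confusion; the paper's $\mathfrak h^{\pm}$ notation is safer.
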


\begin{proof}
	The assertion which needs proof is that if $H$ has finite index in $G$,
	then the completion $Z$ is a finite cover of $Y$.
	Let $\mathfrak{h}^+$ and $\mathfrak{h}^{-}$ be complementary half-spaces of $\tilde Y$.
	Since $G$ acts essentially on $\tilde Y$ and $H$ has finite index,
	there is an element $g \in H$ which \emph{skewers} $\mathfrak{h}^+$,
	i.e. the action of $g$ satisfies 
	$g.\mathfrak{h}^+ \subset \mathfrak{h}^+$ and $g^{-1}.\mathfrak{h}^{-} \subset \mathfrak{h}^-$.
	This implies that if $v \in \tilde Y$ is a vertex,
	there exists a power $k$ such that $g^k.v \in \mathfrak{h}^+$ and $g^{-k}.v \in \mathfrak{h}^-$.
	This implies the corresponding hyperplane belongs to the collection of hyperplanes
	used in \cref{completionupstairs} to construct the universal cover of $Z$.
	Thus we conclude that the universal cover of $Z$ is all of $\tilde Y$.
	Therefore $Z = \tilde Y/H$ is a finite cover of $Y$.
\end{proof}

We now move to a characterization of $H$ being normal in $G$.
As in \cite{DaniLev}, we require a kind of ``core graph'' for our completion.
First let us discuss normal forms for a group that is the fundamental group of a nonpositively-curved
cube complex.

\begin{lem}
	Let $Y$ be a cube complex. Let $T$ be a maximal tree in the $1$-skeleton of $Y$.
	Let $E$ be the set of edges of $Y$ and $S$ the set of squares in $Y$.
	Then $\pi_1(Y,q)$ has a presentation of the form
	\[ \pi_1(Y,q) = \langle E \mid \{e\in E\mid e \in T\} \cup \{\partial s \mid s \in S\} \rangle. \]
	The above presentation is the \emph{cubical presentation} for $\pi_1(Y,q)$.
\end{lem}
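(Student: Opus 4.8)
The plan is to build the claimed presentation directly from the standard presentation of the fundamental group of a CW complex in terms of a maximal tree, and then observe that the $2$-cells of a cube complex are precisely the squares. First I would recall that for any connected CW complex $Y$ with maximal tree $T$ in $Y^{(1)}$, the fundamental group $\pi_1(Y,q)$ has a presentation whose generators are the (oriented) edges of $Y$, whose relators are the edges lying in $T$ (this collapses the tree to the basepoint, realizing the standard fact that $Y$ is homotopy equivalent to $Y/T$, a wedge of circles indexed by the edges not in $T$), and whose remaining relators are the attaching maps $\partial c$ of the $2$-cells $c$ of $Y$, each read as a word in the edge generators via the edge-path it traverses. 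This is the Seifert--van Kampen / cellular-chain computation of $\pi_1$; I would cite it as standard (e.g.\ Hatcher, or the presentation complex discussion) rather than reprove it.

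The only cube-complex-specific point is that the $2$-cells of $Y$ are exactly the squares in $S$, and that the attaching map of a square is (a cyclic conjugate of) its boundary $\partial s$, a word of length four in the edges. Higher-dimensional cubes contribute cells of dimension $\ge 3$, which do not affect $\pi_1$ and so may be ignored. Thus substituting ``$2$-cells'' by ``squares'' in the general CW presentation yields exactly
\[ \pi_1(Y,q) = \langle E \mid \{e \in E \mid e \in T\} \cup \{\partial s \mid s \in S\} \rangle, \]
which is the asserted cubical presentation. I would also note the harmless abuse that each edge $e\in T$ being set equal to the identity allows the remaining relators $\partial s$ to be rewritten as words purely in the free generators $\{e \in E \mid e \notin T\}$, recovering the usual finite presentation when $Y$ is compact.

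There is no real obstacle here; the statement is essentially a repackaging of a classical fact. The one place to be slightly careful is orientation and basepoint bookkeeping: each edge must be assigned an orientation so that ``$e\in E$'' names a definite generator, the boundary word $\partial s$ of a square must be read starting and ending at a consistent corner and with consistent orientations of its four sides, and one must connect each square to the basepoint by a path in $T$ so that $\partial s$ genuinely represents an element of $\pi_1(Y,q)$ (different connecting paths change $\partial s$ only by conjugation, hence do not change the normal closure, so the presentation is well defined up to the usual ambiguity). I would state these conventions explicitly in a sentence and then regard the lemma as proved.
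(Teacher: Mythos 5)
Your proposal is correct. The paper in fact states this lemma without any proof, treating it as the standard presentation of the fundamental group of a connected CW complex via a maximal tree (edges as generators, tree edges and boundaries of $2$-cells as relators, higher cells irrelevant to $\pi_1$), which is exactly the argument you give, including the appropriate care about orientations and conjugation by paths in $T$.
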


There is a natural map from the $1$-skeleton of $\tilde Y$ to the Cayley graph for
$G = \pi_1(Y,q)$ with respect to the generating set $E$ given by identifying the vertices
in each lift of $T$ to $\tilde Y$.

\begin{defn} Let $w$ be a word in the cubical presentation for $G$.
	We say that $w$ is a \emph{cubical word} if it lifts to a path in $\tilde Y$.
	
	A cubical word $w$ is \emph{reduced} if it lifts to a combinatorial geodesic in $\tilde Y$.
\end{defn}

Every element of $G$ can be represented by a cubical word. 
If a word does not lift to a path in $\tilde X$,
then there are points of discontinuity when trying to construct a lift.
We can fill these in with edges in the tree $T$.
In fact, we can make this process algorithmic which will be important for answering algorithmic questions later. 

\begin{prop}
	Let $(Y, q)$ be a non-positively curved cube complex. 
	Let $\mathcal{P} = \langle E\mid R\rangle$ be the associated cubical presentation. 
	Let $w$ be a word in $F(E)$. 
	Then there is an algorithm that computes all reduced cubical words equal to $w$. 
\end{prop}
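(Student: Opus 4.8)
The plan is to give a direct, constructive algorithm that proceeds in two stages: first fill in the ``gaps'' of the attempted lift of $w$ using tree paths, then exhaustively search for all reduced cubical words equal to the resulting element. Let me spell this out. Write $w = e_1 e_2 \cdots e_m$ as a word in $F(E)$. Attempt to lift $w$ to a path in $\tilde Y$ starting at $\tilde q$: read the letters one at a time, and when the next letter $e_i$ is not an edge incident to the current vertex $\tilde v$, we record a discontinuity. The point of the cubical presentation is that collapsing each lift of $T$ identifies such discontinuities, so $e_i$ becomes incident to $\tilde v$ after inserting the unique geodesic in the appropriate lift of $T$ connecting $\tilde v$ to the correct endpoint. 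Since $T$ is a tree, this insertion is algorithmic and unique; doing this for each discontinuity produces a genuine cubical word $w'$ with $w' =_G w$ that lifts to an edge-path $\gamma$ in $\tilde Y$ from $\tilde q$ to some vertex $\tilde g\cdot\tilde q$. (Here I am using the natural map from $\tilde Y^{(1)}$ to the Cayley graph of $G$ with respect to $E$ described just before the definition.)

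Second, I would reduce: a cubical word is reduced exactly when its lift is a combinatorial geodesic, i.e.\ crosses each hyperplane separating its endpoints exactly once and crosses no other hyperplane. So it suffices to enumerate all combinatorial geodesics in $\tilde Y$ from $\tilde q$ to the endpoint $\tilde v$ of $\gamma$, and then read off the corresponding cubical words. The set of hyperplanes of $\tilde Y$ separating $\tilde q$ from $\tilde v$ is exactly the set of hyperplanes crossed by $\gamma$; this is a finite, explicitly computable set $\mathcal{W}$ of size $d = d_{\ell_1}(\tilde q,\tilde v)$. A combinatorial geodesic corresponds precisely to a linear extension of the partial order on $\mathcal{W}$ in which $\mathfrak{h} < \mathfrak{k}$ when $\mathfrak{h}$ separates $\tilde q$ from $\mathfrak{k}$; equivalently, one does a breadth-first / depth-first search in the $1$-skeleton of $\tilde Y$, at each vertex only following edges dual to hyperplanes in $\mathcal{W}$ not yet crossed, and keeping exactly those paths that reach $\tilde v$ in $d$ steps. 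There are finitely many such paths, each of length $d$, and the search terminates. Each such geodesic, read as a sequence of edges and translated back through the natural map to $E$-letters, is a reduced cubical word equal to $w$, and every reduced cubical word equal to $w$ arises this way.

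I would then verify the two directions of correctness: (i) every word output is reduced and equals $w$ in $G$ — it equals $w$ because it and $w'$ have lifts with the same endpoints in $\tilde Y$, hence map to the same group element, and it is reduced by construction as a geodesic; (ii) conversely, any reduced cubical word $u$ with $u =_G w$ lifts to a combinatorial geodesic in $\tilde Y$ from $\tilde q$ to $\tilde v$ (the endpoint is determined by the group element $g$ and the basepoint), hence is among those enumerated. One subtlety worth a sentence: a priori $\tilde Y$ is infinite, so I should note that the search only ever explores the finite convex subcomplex spanned by the hyperplanes in $\mathcal{W}$ (equivalently the cubical convex hull of $\{\tilde q,\tilde v\}$), which is computable from finitely much local data about $Y$ since $Y$ is finite; this is what makes the enumeration effective rather than merely finite.

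The main obstacle, and the step deserving the most care, is the effectivity of working inside $\tilde Y$: one must argue that although $\tilde Y$ is typically infinite, all the relevant combinatorics — building the gap-filling tree paths, identifying $\mathcal{W}$, and running the geodesic search — take place inside a finite, algorithmically constructible region, namely the convex hull of the two endpoint vertices, whose cubes are determined by the flag condition on links of the finite complex $Y$. Once this localization is justified, the rest is a bounded-depth search and the finiteness and correctness follow from the hyperplane characterization of combinatorial geodesics recalled earlier in the paper.
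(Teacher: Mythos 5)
Your first stage (inserting maximal-tree paths at the points of discontinuity to turn $w$ into a genuine cubical word) is exactly what the paper does. For the second stage you diverge: the paper views the resulting loop as a cubical map $(I,0)\to(Y,q)$ and takes its \emph{completion}, which is finite because $\pi_1(I,0)$ is trivial (hence quasiconvex), and which contains the cubical convex hull of $\{\tilde q, w\cdot\tilde q\}$ and therefore all combinatorial geodesics between them; one then lists the geodesics inside that finite complex. You instead enumerate geodesics directly in $\tilde Y$ via the hyperplane characterization, observing that the search is confined to the (finite, computable) convex hull of the two endpoints. Both routes work; the paper's buys economy by reusing the completion machinery and its finiteness theorem, while yours is more self-contained and makes the effectivity of the geodesic enumeration explicit rather than delegating it to the completion algorithm. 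One small inaccuracy to fix: the set $\mathcal{W}$ of hyperplanes separating $\tilde q$ from $\tilde v$ is not ``exactly the set of hyperplanes crossed by $\gamma$'' unless $\gamma$ is already geodesic --- it is the set of hyperplanes that $\gamma$ crosses an \emph{odd} number of times (and identifying when two edges of $\gamma$ are dual to the same hyperplane of $\tilde Y$ itself requires a finite, local computation). This does not damage the argument, since that parity condition is still algorithmically checkable inside the finite region you describe, but as written the sentence is false and the count $d=|\mathcal{W}|$ should be derived from the corrected description.
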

\begin{proof}
	We begin by replacing $w$ by a cubical word. 
	We can view $w$ as a sequence of edges $(e_0, e_1, \dots, e_l)$. 
	In the case that $\iota(e_i) \neq \tau(e_{i-1})$ we can take a path in the maximal tree from $\tau(e_{i-1})$ to $\iota(e_i)$ and add this to our sequence to ensure this path is continuous. 
	It is clear that this procedure does not change $w$. 
	A similar procedure is used to ensure that the path starts and ends at $q$. 
	
	We have now represented $w$ as a loop in $Y$ based at $q$. 
	View this as a map $(I, 0)\to (Y, q)$ subdivided to be made cubical. 
	We can now take the completion of this map.
	This will be a finite cube complex as the $\pi_1(I, 0) = \{e\}$. 
	This will contain the convex hull of $\{\tilde{q}, w\cdot \tilde{q}\}$ in $\tilde{Y}$. 
	Hence, it will contain every combinatorial geodesic between $\tilde{q}$ and $w\cdot \tilde{q}$. 
	Since this complex is finite we can write down all such combinatorial geodesics. 
\end{proof}

Throughout we will freely pass between cubical words and paths in the CAT(0) cube complex
$\tilde Y$. There are two operations we can apply to cubical words that do not change
the element of $G$ represented.
\begin{enumerate}
	\item \textbf{Collapse:} remove or insert a pair $ee^{-1}$ or $e^{-1}e$ for some edge $e$ in $X$.
	\item \textbf{Square Slide:} Replace a pair $st$ with $uv$ if $stv^{-1}u^{-1}$
		is the boundary of some square in $X$.
\end{enumerate}

The proof of the following lemma follows from the simplicial approximation theorem.

\begin{lem} Suppose that $w$ and $w'$ are two cubical words that represent $g \in G$.
	Then they are related by a sequence of collapses and square slides.
	\hfill\qedsymbol
\end{lem}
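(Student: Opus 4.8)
The plan is to interpret both cubical words as edge-paths in the universal cover $\tilde Y$ and use the fact that $\tilde Y$ is simply connected and CAT(0), together with a simplicial-approximation argument carried out on the square filling of a homotopy between the two loops. First I would set up the following picture: because $w$ and $w'$ both represent $g \in G$, the corresponding loops at $q$ in $Y$ (obtained after making them cubical as in the preceding propositions) are freely homotopic rel basepoint; equivalently, their lifts starting at $\tilde q$ are edge-paths in $\tilde Y$ from $\tilde q$ to $g\cdot\tilde q$. I then want to produce a homotopy between these two paths that is itself ``cubical'' in a suitable sense — a map of a subdivided disk (or a square grid) into $\tilde Y$, sending squares to squares (or degenerately to edges/vertices), with the two boundary arcs mapping to the lifts of $w$ and $w'$.

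The key step is to obtain such a combinatorial homotopy. Here is how I would do it: take any continuous homotopy $F\colon D^2 \to \tilde Y$ rel the endpoints $\tilde q$ and $g\cdot\tilde q$, with the two boundary arcs parametrizing the lifts of $w$ and $w'$. Triangulate $D^2$ finely and apply the simplicial approximation theorem to push $F$ to a simplicial (hence cubical, after a standard subdivision matching squares to pairs of triangles) map into $\tilde Y$, keeping it fixed on the boundary since the boundary arcs are already edge-paths. Now read off the homotopy cell by cell: crossing a single square of the grid corresponds exactly to a square slide (or to a collapse/insertion if the square degenerates onto an edge), and crossing a vertex where a spur appears and disappears corresponds to a collapse. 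Walking across the subdivided disk from the $w$-side to the $w'$-side thus realizes the transformation $w \rightsquigarrow w'$ as a finite sequence of collapses and square slides. Finally I would note that projecting back down to $Y$ does not affect the combinatorial bookkeeping, since each elementary move is defined purely in terms of edges and square boundaries, which are preserved by the covering map.

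I expect the main obstacle to be the careful treatment of degeneracies in the simplicial approximation: a generic simplicial map of the grid into $\tilde Y$ will collapse many cells to lower-dimensional images, and one must check that every such degenerate cell contributes either a trivial move or a collapse, never something outside the allowed list — and that the moves can be linearly ordered (a ``shelling'' of the grid, or an innermost-cell induction) so that at each stage the intermediate words are genuine cubical words. A clean way to organize this is to induct on the number of $2$-cells in the homotopy disk: peel off a boundary $2$-cell adjacent to the current frontier, apply the corresponding move, and recurse on the smaller disk. The base case, a disk with no $2$-cells, is a tree, on which the two boundary arcs differ only by collapses. I would also remark that this is the standard van Kampen diagram argument specialized to the cubical presentation, so the details, while slightly tedious, are routine.
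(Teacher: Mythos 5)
Your argument is exactly the one the paper intends: the paper gives no written proof, saying only that the lemma ``follows from the simplicial approximation theorem,'' and your proposal fills in that outline with the standard van Kampen/disk-diagram argument (combinatorial homotopy in $\tilde Y$, peel off $2$-cells one at a time, handle degeneracies as collapses). This is correct and essentially the same approach.
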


The following definition comes from \cite{DaniLev}.

\begin{defn}
	Let $(Z,\hat p)$ be a completion of $f\colon (X,p) \to (Y,q)$.
	The \emph{core graph} $C(Z,\hat p)$ of $Z$ is the collection of all loops in $Z$
	giving reduced cubical words in $G$.
\end{defn}

The core graph contains all the information of the completion in the following sense.

\begin{lem}\label{coregraphsurjective}
	The inclusion $C(Z,\hat p) \to Z$ is surjective on fundamental groups.
\end{lem}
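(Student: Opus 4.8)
The plan is to use the defining property of a completion—that $\hat f\colon(Z,\hat p)\to(Y,q)$ is a local isometry—to pass to universal covers, where $\tilde Z$ embeds as a cubically convex subcomplex of $\tilde Y$, and then to exploit the fact that reduced cubical words are precisely combinatorial geodesics of $\tilde Y$, which by convexity cannot leave $\tilde Z$.

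First I would fix an element $g\in\pi_1(Z,\hat p)$ and recall that, since $Z$ is a completion, $\hat f$ is a local isometry, so by \cref{inducedmaponCC} and the discussion following it the induced map of universal covers $\tilde{\hat f}\colon(\tilde Z,\tilde{\hat p})\to(\tilde Y,\tilde q)$ is an isometric embedding whose image is a cubically convex subcomplex of $\tilde Y$ (this is also the content of \cref{completionupstairs}). This embedding is equivariant with respect to the injection $\pi_1(Z,\hat p)\hookrightarrow G$, so the subgroup $\pi_1(Z,\hat p)$ preserves the convex subcomplex $\tilde{\hat f}(\tilde Z)$; in particular both $\tilde q$ and $g\cdot\tilde q$ are vertices of $\tilde{\hat f}(\tilde Z)$.

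Next I would pick a combinatorial geodesic $\gamma$ in $\tilde Y$ from $\tilde q$ to $g\cdot\tilde q$. By cubical convexity of $\tilde{\hat f}(\tilde Z)$ the path $\gamma$ lies entirely inside it, hence descends to a loop $\bar\gamma$ in $Z$ based at $\hat p$. Since $\gamma$ is exactly the lift of $\bar\gamma$ to the universal cover starting at $\tilde{\hat p}$ and ending at $g\cdot\tilde{\hat p}$, the loop $\bar\gamma$ represents $g$ in $\pi_1(Z,\hat p)$. On the other hand, pushing the edges of $\bar\gamma$ forward under $\hat f$ yields a cubical word in the generators $E$ whose lift to $\tilde Y$ based at $\tilde q$ is precisely the combinatorial geodesic $\gamma$; by definition this word is reduced, so $\bar\gamma$ is one of the loops comprising the core graph, i.e.\ $\bar\gamma\subseteq C(Z,\hat p)$. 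Therefore $g$ lies in the image of $\pi_1(C(Z,\hat p),\hat p)\to\pi_1(Z,\hat p)$, and since $g$ was arbitrary the inclusion is surjective on fundamental groups. (One should also observe that $\hat p\in C(Z,\hat p)$—for instance because the constant loop reads the empty, hence reduced, cubical word—so that the statement is meaningful.)

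The only real content is the convexity step: once one knows that the universal cover of a completion sits inside $\tilde Y$ as a cubically convex subcomplex, so that combinatorial geodesics of $\tilde Y$ between two of its vertices remain inside it, everything else is an unwinding of the definitions of cubical word, reduced cubical word, and core graph. I do not expect any genuine obstacle beyond being careful that the word read off from $\bar\gamma$ via $\hat f$ is the one whose lift is $\gamma$.
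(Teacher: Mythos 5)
Your proof is correct and takes essentially the same route as the paper: both arguments use that $\tilde Z$ embeds as a cubically convex subcomplex of $\tilde Y$, so a combinatorial geodesic from $\tilde q$ to $g\cdot\tilde q$ (i.e.\ a reduced cubical word for $g$) stays inside $\tilde Z$ and projects to a loop in the core graph. Your version simply spells out the details that the paper's three-line proof leaves implicit.
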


\begin{proof}
	Let $g$ be an element of $\pi_1(Z,\hat p) = H$. We can pick a cubical word $w$ representing $g$
	which lies in the image of $\tilde Z$. 
	Since $\tilde Z$ is convex in $\tilde X$, 
	there is a reduced cubical word representing $g$ in $\tilde Z$.
	This loop projects to a loop in $C(Z,\hat p)$.
\end{proof}

\begin{lem}\label{samecoresamegroup}
	Let $(Z,\hat p)$ and $(Z',\hat p')$ be completions
	of cube complexes representing $H$ in $G$.
	Then $C(Z,\hat p) = C(Z',\hat p')$.
\end{lem}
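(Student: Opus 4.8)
The plan is to identify the core graph of any completion representing $H$ with a single subcomplex of $\tilde Y$ that depends only on $H$, $G$ and $\tilde Y$. So let $(Z,\hat p)$ be a completion of a cubical map representing $H$, with local isometry $\hat f\colon(Z,\hat p)\to(Y,q)$. By \cref{inducedmaponCC} and the discussion following it, the map induced by $\hat f$ on universal covers is an isometric embedding $\tilde Z\hookrightarrow\tilde Y$ onto a cubically convex subcomplex; under it $\tilde p$ is identified with $\tilde q$, and the deck action of $\pi_1(Z,\hat p)\cong H$ on $\tilde Z$ is the restriction to $H$ of the $G$-action on $\tilde Y$. In particular $\tilde Z$ contains the orbit $H\cdot\tilde q$.

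First I would compute the preimage $\widetilde{C}$ of $C(Z,\hat p)$ under the covering $\tilde Z\to Z$. A loop in $Z$ based at $\hat p$ representing a reduced cubical word lifts, starting at $\tilde p=\tilde q$, to a combinatorial geodesic of $\tilde Y$ from $\tilde q$ to $h\cdot\tilde q$ for some $h\in H$; conversely, since $\tilde Z$ is cubically convex and contains $H\cdot\tilde q$, every combinatorial geodesic of $\tilde Y$ from $\tilde q$ to $h\cdot\tilde q$ lies in $\tilde Z$ and projects to such a loop. Passing to $H$-translates to obtain the full preimage then gives
\[
\widetilde{C}\;=\;\bigcup_{a,b\in H}\bigl\{\text{combinatorial geodesics of }\tilde Y\text{ from }a\cdot\tilde q\text{ to }b\cdot\tilde q\bigr\},
\]
a subcomplex of $\tilde Y$; and $C(Z,\hat p)=\widetilde{C}/H$ as a subcomplex of $Z=\tilde Z/H$, carrying the map to $Y=\tilde Y/G$ induced by the inclusion. (By \cref{coregraphsurjective}, $\widetilde{C}$ is connected, a useful sanity check.)

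Now the right-hand side of the displayed equality makes no reference to $Z$: it is determined by $\tilde Y$, the $G$-action, and $H$ alone. Hence if $(Z',\hat p')$ is another completion representing $H$, the same computation yields the same subcomplex $\widetilde{C}\subset\tilde Y$, so $C(Z',\hat p')=\widetilde{C}/H=C(Z,\hat p)$, the identification being canonical once we normalize $\tilde p=\tilde p'=\tilde q$. This is the sense in which the two core graphs — a priori subcomplexes of different cube complexes — are ``equal'': they are identified, compatibly with their maps to $Y$, through their common realization as $\widetilde{C}/H$.

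The step I expect to require the most care is the computation of $\widetilde{C}$: one must verify both inclusions precisely, in particular (i) that the $H$-translates of the geodesics issuing from $\tilde q$ are exactly the combinatorial geodesics between pairs of points of $H\cdot\tilde q$, and that nothing else lies over the core graph, and (ii) that a combinatorial geodesic between orbit points, read as a loop in $Z$, does represent a \emph{reduced} cubical word — which uses that $\tilde Z\hookrightarrow\tilde Y$ takes combinatorial geodesics to combinatorial geodesics. The remaining steps are formal.
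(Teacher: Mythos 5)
Your proposal is correct and follows essentially the same route as the paper: both arguments rest on the fact that $\tilde Z$ and $\tilde Z'$ are cubically convex in $\tilde Y$ and contain the orbit $H\cdot\tilde q$, so every combinatorial geodesic between orbit points (equivalently, every reduced cubical representative of an element of $H$) lies in both. Your intrinsic description of the common preimage $\widetilde{C}$ is a more careful packaging of the paper's two-line argument, and in particular makes precise the sense in which subcomplexes of two a priori different cube complexes are ``equal,'' which the paper leaves implicit.
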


\begin{proof}
	Suppose $w$ is the label of a loop in $C(Z,\hat p)$.
	Then $w$ represents an element $g$ of $H$.
	Thus there is a loop in $(Z',\hat p')$ that represents $g$.
	Since $\tilde Z'$ is convex in $\tilde X$ every reduced path representing $g$
	is in $\tilde Z'$. Thus the path $w$ is in $\tilde Z'$.
\end{proof}

\begin{defn}
	Let $g \in G$ be a group element and $\gamma\colon [0,1] \to Y$
	be a loop in the $1$-skeleton of $Y$ based at $q$ representing $g$.
	Let $H$ be a finitely-generated subgroup of $G$
	and $(X,p)$ a cube complex representing $H$.
	Let $(X_g,p)$ be the cube complex obtained from $X$
	by attaching an interval at $p$ and extending
	$f\colon (X,p) \to (Y,q)$ by $\gamma$ on this interval.
	(Subdivide as necessary to make this a cubical map.)
	Let $x$ be the endpoint of the interval that is not $p$.
	The \emph{$g$-normalized completion} of $X$ is the
	completion $(Z_g,\hat p)$ of $X_g$. 
	Let $z$ be the image of $x$ in $Z_g$.
\end{defn}

\begin{lem}
	 Let $H$ be a subgroup of $G$ represented by a cube complex
	 $f\colon (X,p) \to (Y,q)$.
	 Let $(Z,\hat p)$ be the completion of $X$.
	 Then $H$ is normalized by $g$ if and only if $C(Z,\hat p) = C(Z_g,z)$.
 \end{lem}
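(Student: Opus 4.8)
The plan is to show that a core graph is nothing other than the set of reduced cubical words representing a fixed subgroup, and that this subgroup can be recovered from that set. For a subgroup $K \le G$, write $\mathcal{R}(K)$ for the set of reduced cubical words in the cubical presentation of $G$ that represent elements of $K$. Following the proof of \cref{samecoresamegroup}, I treat a core graph and the set of words labelling its loops at the basepoint as the same data; thus ``$C(Z,\hat p)=C(Z_g,z)$'' means precisely that the same set of reduced cubical words arises from loops at $\hat p$ and from loops at $z$.

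First I would prove that for \emph{any} completion $(W,\hat w)$ of a cube complex representing $K$, one has $C(W,\hat w)=\mathcal{R}(K)$. If a loop at $\hat w$ in $C(W,\hat w)$ labels a word $w$, then $w$ is reduced and cubical by definition, and since the loop represents an element of $\hat f_\ast\pi_1(W,\hat w)=K$ (by construction of the completion), $w\in\mathcal{R}(K)$. Conversely, if $w\in\mathcal{R}(K)$ represents $k\in K$, then $w$ lifts to a combinatorial geodesic of $\tilde Y$ from $\tilde q$ to $k\cdot\tilde q$. By \cref{inducedmaponCC} and the remark following it, the lift $\tilde W\to\tilde Y$ is an isometric embedding, with image a cubically convex subcomplex (as in \cref{completionupstairs}); this subcomplex is $K$-invariant and contains $\tilde q$, hence contains the whole orbit $K\cdot\tilde q$, hence — being convex — contains the geodesic above. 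That geodesic then projects to a loop at $\hat w$ lying in $C(W,\hat w)$ and labelled by $w$.

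Next I would identify which subgroups $C(Z,\hat p)$ and $C(Z_g,z)$ belong to. Attaching the whisker to $X$ does not change the fundamental group, so $(X_g,p)$ still represents $H$, while change of basepoint along the whisker — a path whose image in $G$ is $g$ — shows that $(X_g,x)$ represents a conjugate $H^g$ of $H$ by $g$. The key point is that none of the three completion moves refers to the basepoint, so the completion of the underlying map $X_g\to Y$ is a single cube complex $Z_g$ which is at once the completion of $(X_g,p)$ based at $\hat p$ and of $(X_g,x)$ based at $z$. Applying the previous paragraph to $(Z,\hat p)$ and to $(Z_g,z)$ yields $C(Z,\hat p)=\mathcal{R}(H)$ and $C(Z_g,z)=\mathcal{R}(H^g)$. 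Finally, $\mathcal{R}$ is injective on subgroups: every element of $G$ has a reduced cubical representative (the combinatorial geodesic of the CAT(0) cube complex $\tilde Y$ from $\tilde q$ to its translate), so $K=\{\gamma\in G:\gamma\text{ is represented by some word of }\mathcal{R}(K)\}$.

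Putting this together: $g$ normalizes $H$ if and only if $H^g=H$ (the conditions $gHg\inverse=H$ and $g\inverse Hg=H$ being equivalent), and $H^g=H$ if and only if $\mathcal{R}(H^g)=\mathcal{R}(H)$ (using injectivity of $\mathcal{R}$), which in turn holds if and only if $C(Z_g,z)=C(Z,\hat p)$. I expect the only genuinely delicate point to be the bookkeeping in the third paragraph: one must check that the completion procedure is insensitive to the choice of basepoint — so that $(Z_g,z)$ really is a completion of a cube complex representing $H^g$ — and that the whisker conjugates $H$ by $g$ (though for the statement itself it does not matter whether the conjugating element is $g$ or $g\inverse$). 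Everything else is formal once the first two steps are in place.
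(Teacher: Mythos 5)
Your proposal is correct and follows essentially the same route as the paper: the paper's proof cites \cref{samecoresamegroup} for the forward direction and \cref{coregraphsurjective} for the converse, and your unified claim that any completion of a complex representing $K$ has core graph equal to the set of reduced cubical words for $K$ is exactly the content of those two lemmas, proved by the same convexity-of-$\tilde Z$-in-$\tilde Y$ argument. Your extra care about basepoint-independence of the completion moves and about $g$ versus $g^{-1}$ is a welcome unpacking of details the paper leaves implicit, but it is not a different approach.
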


 \begin{proof}
	 It is clear that $(X_g,x)$ represents the subgroup $g^{-1}Hg$.
	 Thus if $g$ normalizes $H$, then $C(Z_g,z) = C(Z,\hat p)$ by \cref{samecoresamegroup}.

	 By \cref{coregraphsurjective},
	 the core graph contains all the information about the fundamental group.
	 Thus if the completions have the same core graph, then $g^{-1}Hg = H$,
	 so $g$ normalizes $H$.
 \end{proof}

 We summarize the results of this section for quasiconvex subgroups as a theorem. For an alternative proof of the membership problem one can use biautomaticity and \cite{KharlampovichMiasnikovWeil}.
 \begin{thm}\label{finalthm}
	 Let $G = \pi_1(Y,q)$ for a nonpositively-curved cube complex $Y$.
	 Let $H$ be a quasiconvex subgroup of $G$.
	 Then there is an algorithm that solves the following problems.
	 \begin{enumerate}
		 \item The membership problem for $H$.
		 \item Whether, given $g\in G$, there exists an $n$ such that $g^n\in H$. 
		 \item Whether $H$ is normal in $G$.
		 \item Whether $H$ is of finite index $G$, and determines the index if so.
	 \end{enumerate}
 \end{thm}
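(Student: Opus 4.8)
The plan is to reduce all four questions to the finite cubical completion furnished by \cref{quasiconvexsubgroup}. Fix a maximal tree $T\subseteq Y^{(1)}$ and present $G$ by the associated cubical presentation, so that elements of $G$ are words in the edges of $Y$; since $H$ is quasiconvex it is finitely generated, and from a finite generating set we build a finite graph $(X,p)$ with a cubical map realising $H\hookrightarrow G$. By \cref{quasiconvexsubgroup} together with the algorithm of \cref{mainthm} we may compute its completion: a finite nonpositively-curved cube complex $(Z,\hat p)$ with a local isometry $f\colon(Z,\hat p)\to(Y,q)$ satisfying $f_\ast\pi_1(Z,\hat p)=H$, whose universal cover $\tilde Z$ embeds isometrically in $\tilde Y$ as the cubical convex hull of $\tilde f(\tilde X)$ (\cref{completionupstairs}, \cref{inducedmaponCC}). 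For the \textbf{membership problem}, given $g$ presented by a word $w$, I would compute the finite list of reduced cubical words equal to $w$ (using the algorithm described earlier) and, for each, attempt to lift it to a combinatorial path in $Z$ starting at $\hat p$; at a vertex $v$ the next edge has at most one preimage under $f_v$, so the lift is forced, and one answers ``$g\in H$'' exactly when some reduced word lifts to a loop. One implication is immediate from $f_\ast\pi_1(Z,\hat p)=H$; for the other, if $g\in H$ then a loop in $Z$ representing $g$ lifts to a path in $\tilde Z$ which, by convexity, may be straightened to a combinatorial geodesic with the same endpoints, and this geodesic --- being a geodesic of $\tilde Y$ under the isometric embedding --- is a reduced cubical word for $g$, hence occurs in the list and by construction lifts to a loop in $Z$.

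For the \textbf{power problem} I would run the membership algorithm on $g,g^2,g^3,\dots$ up to a precomputed bound. After replacing $g$ by a suitable power so that it acts on $\tilde Y$ with a combinatorial axis $\ell$, the key claim is that if $g^m\in H$ with $m$ minimal then $m\le\#Z^{(0)}$. The reason is that $\tilde Z$, being a nonempty convex $g^m$-invariant subcomplex containing $\tilde q$, must contain $\ell$ (equivalently, a combinatorial geodesic in $\tilde Z$ from $\tilde q$ to $g^m\tilde q$ fellow-travels $m$ consecutive fundamental domains of $\ell$); the vertices $v,gv,\dots,g^{m-1}v$ of $\ell$ then descend to vertices of $Z=\tilde Z/H$ that are pairwise distinct, since $\pi(g^iv)=\pi(g^jv)$ with $i<j$ would give $g^{j-i}\in H$ by freeness of the $G$-action, contradicting minimality. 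Hence testing $g,g^2,\dots,g^{\#Z^{(0)}}$ for membership settles the question.

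For \textbf{normality}, I would use the lemma characterising ``$g$ normalises $H$'' by the equality of core graphs $C(Z,\hat p)=C(Z_g,z)$. Since conjugates of quasiconvex subgroups are quasiconvex, for each generator $s$ the $s$-normalised completion $Z_s$ is a finite complex computable by \cref{quasiconvexsubgroup}; the core graphs are finite subgraphs computable from the completions (an edge lies in the core graph iff it is traversed by a basepointed loop lifting to a combinatorial geodesic of $\tilde Y$, which is decidable); and $H\trianglelefteq G$ iff $C(Z,\hat p)=C(Z_s,z)$ for every $s$ in the finite generating set. For \textbf{finite index} I would first pass to the essential core: let $\tilde Y_{\mathrm{ess}}\subseteq\tilde Y$ be the canonical $G$-invariant convex subcomplex on which $G$ acts essentially --- computable because cocompactness makes shallowness of a half-space decidable --- and set $Y_{\mathrm{ess}}=\tilde Y_{\mathrm{ess}}/G$, which has the same fundamental group (the equivariant nearest-point projection $\tilde Y\to\tilde Y_{\mathrm{ess}}$ is a homotopy equivalence of free contractible $G$-complexes and descends to the quotients), so $H$ transports to a quasiconvex subgroup of $\pi_1(Y_{\mathrm{ess}})$. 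Computing a completion $Z_{\mathrm{ess}}$ of $H$ over $Y_{\mathrm{ess}}$ and applying the finite-index criterion for essential actions proved above, $[G:H]<\infty$ iff $Z_{\mathrm{ess}}\to Y_{\mathrm{ess}}$ is a covering map --- decidable, since $Z_{\mathrm{ess}}$ is finite and, surjectivity being automatic by connectedness and compactness, one need only check via \cref{inducedmaponCC} that each local map is an isomorphism --- in which case the index is the degree $\#Z_{\mathrm{ess}}^{(0)}/\#Y_{\mathrm{ess}}^{(0)}$.

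The computability of completions, of reduced cubical words, and of core graphs is routine given the earlier sections; the real work is in turning the two ``finiteness'' conditions into \emph{bounded} searches. The main obstacle I expect is the bound in the power problem --- establishing that the least $n$ with $g^n\in H$ is controlled by the size of $Z$, which forces a careful analysis of how a combinatorial axis of $g$ sits inside the convex core $\tilde Z$ --- together with, for the index problem, correctly excising the non-essential directions of $\tilde Y$ (which a subgroup of arbitrary index may simply fail to see) by means of the essential core, while keeping every step effective.
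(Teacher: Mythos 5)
Your overall architecture matches the paper's: parts 1, 3 and 4 are handled exactly as the paper intends (lifting reduced cubical words into the finite completion for membership, comparing core graphs $C(Z,\hat p)$ and $C(Z_s,z)$ over a finite generating set for normality, and the essential-core/covering criterion for finite index, where your explicit passage to $\tilde Y_{\mathrm{ess}}$ fills in a step the paper only gestures at). For part 2 --- the only item the paper actually proves in detail --- you and the paper use the same pigeonhole strategy: exhibit $m$ vertices of $\tilde Z$ lying in pairwise distinct $H$-orbits, where $m$ is minimal with $g^m\in H$, and conclude $m\le\#Z^{(0)}$.

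The gap is in the step ``$\tilde Z$, being a nonempty convex $g^m$-invariant subcomplex containing $\tilde q$, must contain $\ell$.'' A convex $g^m$-invariant subcomplex contains \emph{some} combinatorial axis of $g^m$, but not necessarily the chosen $g$-axis $\ell$: already in $\tilde Y=\mathbb R^2$ with $H=\langle g\rangle$, $g=(1,0)$, the subcomplex $\tilde Z$ is the $x$-axis while every horizontal line is an axis of $g$. Worse, an axis of $g^m$ lying inside $\tilde Z$ is only $g^m$-invariant, not $g$-invariant, so the points $v,gv,\dots,g^{m-1}v$ that your pigeonhole needs are not known to lie in $\tilde Z$ at all; your parenthetical ``fellow-travels $m$ fundamental domains'' is not equivalent to containment and does not by itself produce $m$ distinct vertices of $Z=\tilde Z/H$. (In a tree the axis is unique, so your argument is airtight there; the problem is genuinely higher-dimensional, where axes come in parallel families.) To be fair, the paper's own proof has the mirror-image of this gap: it asserts $g\cdot\tilde p\in\tilde X_H$ on the grounds that reduced representatives of $g^n$ lie in $\tilde X_H$, which fails whenever $g\tilde p$ is not combinatorially between $\tilde p$ and $g^n\tilde p$ (e.g.\ $g=bab^{-1}$, $H=\langle ba^2b^{-1}\rangle$ in $F_2$, where $g\tilde p\notin\tilde X_H$ but the axis of $g$ \emph{is} in $\tilde X_H$). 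So your axis-based variant is pointed in the right direction, but to close the argument you must actually produce a $g$-invariant set of $m$ vertices inside $\tilde Z$ --- for instance by showing that some $g$-axis (or a $\langle g\rangle$-orbit of a vertex minimizing displacement within $\bigcap_{i=0}^{m-1}g^i\tilde Z$, once that intersection is shown nonempty) lies in $\tilde Z$. As written, this step would fail.
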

\begin{proof}
	The only statement that requires proof is 2. 
	Similar to \cite{DaniLev}, we show the following 
	\begin{claim}
		A power of $g$ is in $H$ if and only if $g^k\in H$ for some $k\leq L$, where $L$ is the number of vertices of $X_H$. 
	\end{claim}
	One direction is clear, thus we focus on the forward direction. 
	Suppose that $g^n\in H$ for some $n$. 
	Then any reduced cubical representative of $g^n$ can be read in the 1-skeleton of $H$. 
	I.e. the map $[0, 1]\to Y$ representing $g^n$ factors through $H$. 
	This implies that $g\cdot \tilde{p}\in \tilde{X}_H\subset \tilde{Y}$. 
	There are $L$ orbits of vertices in $\tilde{X}_H$ thus for some $k\leq L$ we have that $g^k\cdot \tilde{p} = \tilde{p}$. 
	Thus $g^k$ gives a loop in $X_H$ based at $p$ and $g^k$ is an element of $H$. 
\end{proof}

\bibliographystyle{alpha}
\bibliography{bib.bib}

\begin{thebibliography}{KMW17}

\bibitem[BH99]{BH99}
Martin~R. Bridson and Andr\'{e} Haefliger.
\newblock {\em Metric spaces of non-positive curvature}, volume 319 of {\em
  Grundlehren der Mathematischen Wissenschaften [Fundamental Principles of
  Mathematical Sciences]}.
\newblock Springer-Verlag, Berlin, 1999.

\bibitem[BL18]{BeekerLazarovich}
Benjamin Beeker and Nir Lazarovich.
\newblock Stallings' folds for cube complexes.
\newblock {\em Israel J. Math.}, 227(1):331--363, 2018.

\bibitem[DL19]{DaniLev}
Pallavi Dani and Ivan Levcovitz.
\newblock Subgroups of right-angled coxeter groups via stallings-like
  techniques.
\newblock Available at arXiv:1908.09046 [Math.GT], 2019.

\bibitem[Gro87]{Gromov}
M.~Gromov.
\newblock Hyperbolic groups.
\newblock In {\em Essays in group theory}, volume~8 of {\em Math. Sci. Res.
  Inst. Publ.}, pages 75--263. Springer, New York, 1987.

\bibitem[Hag08]{Haglund}
Fr\'{e}d\'{e}ric Haglund.
\newblock Finite index subgroups of graph products.
\newblock {\em Geom. Dedicata}, 135:167--209, 2008.

\bibitem[KMW17]{KharlampovichMiasnikovWeil}
Olga Kharlampovich, Alexei Miasnikov, and Pascal Weil.
\newblock Stallings graphs for quasi-convex subgroups.
\newblock {\em J. Algebra}, 488:442--483, 2017.

\bibitem[{Rol}16]{Roller}
Martin {Roller}.
\newblock {Poc Sets, Median Algebras and Group Actions}.
\newblock Available at arXiv:1607.07747 [math.GN], July 2016.

\bibitem[Sag95]{SageevEnds}
Michah Sageev.
\newblock Ends of group pairs and non-positively curved cube complexes.
\newblock {\em Proc. London Math. Soc. (3)}, 71(3):585--617, 1995.

\bibitem[Sag14]{Sageev}
Michah Sageev.
\newblock {$\rm CAT(0)$} cube complexes and groups.
\newblock In {\em Geometric group theory}, volume~21 of {\em IAS/Park City
  Math. Ser.}, pages 7--54. Amer. Math. Soc., Providence, RI, 2014.

\bibitem[Sta83]{Stallings}
John~R. Stallings.
\newblock Topology of finite graphs.
\newblock {\em Invent. Math.}, 71(3):551--565, 1983.

\end{thebibliography}
\end{document}